\newcommand{\mathset}[1]{{\left\{#1\right\}}}
\newcommand{\absolute}[1]{\left\lvert#1\right\rvert}
\newcommand{\norm}[1]{\left\|#1\right\|}
\newtheorem{thm}{Theorem}[section]
\newtheorem{prop}[thm]{Proposition}
\newtheorem{remark}[thm]{Remark}
\newtheorem{definition}[thm]{Definition}
\newtheorem{Thm}{Theorem}
\newtheorem*{Satz*}{Satz}
\newtheorem{task}{Task}
\newtheorem{Lemma}[thm]{Lemma}
\newtheorem{Corollary}[thm]{Corollary}
\newtheorem{Assumption}{Assumption}
\DeclareMathOperator{\Spec}{Spec}
\DeclareMathOperator{\Pic}{Pic}
\DeclareMathOperator{\Div}{Div}
\DeclareMathOperator{\divisor}{div}
\DeclareMathOperator{\ddc}{dd}
\DeclareMathOperator{\rank}{Ran}
\DeclareMathOperator{\average}{av}
\title[Theta-Induced Diffusion on Tate Curves]{Theta-Induced Diffusion  on Tate Elliptic Curves over Non-Archimedean Local Fields}
\author{Patrick Erik Bradley}
\date{\today}
\begin{document}

\maketitle

\begin{abstract}
A diffusion operator on the $K$-rational points of a Tate elliptic curve $E_q$ is constructed, where $K$ is a non-archimedean local field,  as well as an operator on the Berkovich-analytification $E_q^{an}$ of $E_q$. These are integral operators for measures coming from a regular $1$-form, and  kernel functions constructed via theta functions. The second operator can be described via certain  non-archimedan curvature forms on $E_q^{an}$.
The spectra of these  self-adjoint bounded operators on the Hilbert spaces of $L^2$-functions  are
identical and found to consist of finitely many eigenvalues. A study of the corresponding heat equations  yields a positive answer to the Cauchy problem, and  induced Markov processes on the curve.
Finally, some geometric information about the $K$-rational points of $E_q$ is retrieved from the spectrum.
\end{abstract}


\section{Introduction}

The uniformisation of a projective algebraic curve $X$ addresses the problem of finding its universal cover $\Omega$ and the action of its fundamental group $\Gamma$ such that $X\cong \Omega/\Gamma$. In the case of curves defined over the complex numbers, this
problem is solved by showing that the only simply connected Riemann surfaces are up to conformal equivalence the open unit disc, the complex plane and the Riemann sphere. Hence, in the case of an elliptic curve $E$, the fundamental group $\pi_1(E,0)$ can be  represented
as a lattice $\Lambda$ acting on $\mathds{C}$. In other words,
$E\cong \mathds{C}/\Lambda$. In the  case of a non-archimedean local field $K$, there are much more simply connected subdomains of $\mathds{P}_1(K)$. However, the Tate elliptic curve $E_q(K)$ does have a uniformisation of the form $E_q(K)\cong K^\times/q^{\mathds{Z}}
$ where the multiplicative group $K^\times$ is indeed simply connected, and the fundamental group is a multiplicative lattice depending on a parameter $q$ in the ring of integers of $K$.
Unlike in the complex case, not every elliptic curve has a uniformisation of this kind. It turns out that the ones for which this is possible are precisely the ones with split multiplicative  reduction \cite[Thm.\ V.5.3]{Silverman1994}.
\newline

Brownian motion can be modelled by the heat equation, which describes a diffusion process on a given space. One of its mathematical meanings is to be a tool for extracting information about the space from the diffusion equation. Already the spectrum of the corresponding Laplacian operator reveals something about the space. In the $p$-adic context, the heat equation has been studied only on local fields, cf.\ e.g.\ \cite{Zuniga2015}, and on subspaces of such, cf.\ e.g.\ \cite{ZunigaNetworks}, cf.\ also
 Brownian motion on $\mathds{Q}_p^d$ \cite{RW2023}.
 A multi-variate version of the $p$-adic heat equation is studied in \cite{RZ2008}, where it is shown that its fundamental solution is the transition density of a Markov process. 
The author also studied heat equations 
on Mumford curves \cite{brad_heatMumf}, where the approach was inspired by W.\ Z\'{u}\~{n}iga-Galindo's approach to studying $p$-adic heat equations on finite graphs \cite{ZunigaNetworks} and interpreting diffusion as local transitioning between the vertices of  a reduction graph of the curve along edges.
This means that only information about the skeleton of the  Mumford curve, and these include Tate curves, should be
retrievable through this approach.
A retrieval of a reduction graph in the case of a Mumford curve or a Mumford-uniformisable abelian variety using Vladimirov-Taibleson-type operators as studied 
in \cite{Taibleson1975,VVZ1994} was effected in \cite{BL_shapes_p} from their infinite spectra.
So, a diffusion operator allowing to extract geometric information about $p$-adic spaces becomes a desired object.
\newline

This present article addresses the mentioned desideratum in the simplest higher genus case of a Tate elliptic curve $E_q$ by using the invariant measure $\absolute{\omega}$ on $E_q(K)$ coming from a regular differential form $\omega$ on $E_q(K)$. Notice that $p$-adic elliptic curves which are also $1$-dimensional $p$-adic manifolds are necessarily Tate curves (possibly after a finite field extension), because elliptic curves with good reduction are not locally embeddable into the local field. So, this article studies the simplest example of a compact $p$-adic manifold which is also a projective algebraic variety.
\newline

The Berkovich analytification $E_q^{an}$ captures all  reduction graphs associated with the rational points of the curve for any (complete) extension field of $K$, it is natural to also construct an operator which acts on functions on $E_q^{an}$. 
In this way, the necessity to fix the field $K$ becomes obsolete, as long as it is sufficiently large.
This more unifying approach becomes possible through an integral operator which can be described as integrating over a signed Radon measure presented as the difference between two curvature forms of the type
\[
c_1\left(
\mathcal{L}
\norm{\cdot}\right)
\]
where $\norm{\cdot}$ is a continuous subharmonic metric on a  line bundle 
$\mathcal{L}$ which is ample. This so-called \emph{Chambert-Loir measure} was developped in the context of non-archimedean Arakelov theory and is in the present case a finite linear combination of Dirac measures on the skeleton of $E_q^{an}$, and the metrics are obtained because the non-archimedean Calabi-Yau problem for smooth strictly $K$-analytic curves has been solved in Thuillier's dissertation \cite{Thuillier2005}. This is a case in which the Monge-Amp\`ere equation
\[
c_1(\mathcal{L},\norm{\cdot})^n=\nu
\]
for a given positive Radon measure on the analytification of an $n$-dimensional algebraic variety $X$ and an invertible sheaf $\mathcal{L}$ on $X$ can be solved. In higher dimensions, this has also been solved under certain mild restrictions quite recently in e.g.\ \cite{BFJ2016,FGK2022}.
The diffusion operator on $E_q^{an}$ is obtained from the operator $\mathcal{H}_\theta$ on $E_q(K)$ described below essentially by pushing forward to the skeleton, and thus has an identical behaviour as $\mathcal{H}_\theta$ concerning the spectrum and the heat equation. 
\newline

Before describing the operator $\mathcal{H}_\theta$, let us view its first application (Theorem \ref{applicationTheorem}). It turns out that its spectrum can detect the presence or absence of two kinds of points:
\begin{itemize}
\item $2$-torsion points in $E_q(K)$,
\item third roots of certain types of points in $E_q(K)$ depending on the valuation $v(q)$ of the curve parameter $q$.
\end{itemize}
In the case of their presence, also the
absolute values of their representatives in the fundamental domain are detected.
Furthermore:
\begin{itemize}
\item the parity of the integer number $v(q)$,
\end{itemize} 
where $v(x)=-\log_{p^{f}}(x)$, is detectable from the spectrum. Here, $f$ is the degree of the residue field extension.
These results depend on knowing the field $K$, in particular its uniformiser $\pi$, and stands in contrast to \cite{BL_shapes_p}, where the reduction graph structure is revealed from the spectrum. This result opens a new bridge  between ultrametric diffusion and arithmetic. In  particular the fact that the detectability of whether certain kinds of $K$-rational points exist can now be read off the spectrum of a $p$-adic operator begs the question of a deeper connection between $p$-adic diffusion on an elliptic curve and its arithmetic.
\newline

The new diffusion operator developed here is, like all $p$-adic diffusion operators known so far, an integral operator whose kernel function is defined via the absolute value of a meromorphic function on the Tate curve. This generalises the usual $p$-adic kernel functions, since those are given by functions depending on the absolute value of a coordinate function on the affine line, aka radial functions.
However, unlike the Vladimirov-Taibleson operator, it is not expressed as a pseudodifferential operator, meaning that the  Fourier transform is not used here.
Its kernel function is constructed via theta functions on the multiplicative group $K^\times$ which are invariant under the uniformising group $q^{\mathds{Z}}$.
It turns out that, in this way, a bounded linear operator is obtained which is self-adjoint on the Hilbert space of $L^2$-functions with respect to integrating along the invariant measure $\absolute{\omega}$. More precisely:
\begin{Thm}\label{firstTheorem}
The space $L^2(E_q(K),\absolute{\omega})$ has an orthogonal decomposition
\[
L^2(E_q(K),\absolute{\omega})=L^2(E_q(K))_\sigma\oplus L^2(E_q(K))_0
\]
into $\mathcal{H}_{\theta}$-invariant subspaces. The subspace $L^2(E_q(K))_\sigma$ is of finite dimension
$v(q)$ and spanned by the indicator functions 
supported on the circles $S_\ell(0)$ of radius $p^{-f\ell}$ centred in zero, with $\ell = 0,\dots,v(q) - 1$,
and the subspace $L^2(E_q(K))_0$ is spanned by the wavelets on $E_q(K)$ obtained by pull-back of Kozyrev wavelets. The
spectrum of $\mathcal{H}_\theta$ as a linear operator on $L^2(E_q(K),\absolute{\omega})$ consists entirely of
eigenvalues. These are the eigenvalues of a certain matrix acting
on $L^2(E_q(K))_\sigma$, and the negative degree values  corresponding to
normalised wavelets on $E_q(K)$ supported in circles $S_k(x)$ with $x$ inside the annulus 
$\mathset{u\in K^\times\mid\absolute{q}<\absolute{u}\le 1}$, and 
$k = 0,\dots, v(q) - 1$.
\end{Thm}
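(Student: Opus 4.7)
The overall strategy is to explicitly exhibit a complete orthonormal system adapted to the annular structure of the fundamental domain, show that $\mathcal{H}_\theta$ preserves the splitting into ``constant-on-circles'' versus ``mean-zero-on-circles'' parts, and then diagonalise on each piece. Fix once and for all the fundamental domain
\[
A=\{u\in K^\times\mid \absolute{q}<\absolute{u}\le 1\}
\]
for the uniformisation $K^\times/q^{\mathds{Z}}\cong E_q(K)$. It decomposes as the disjoint union of the circles $S_\ell(0)=\{|u|=p^{-f\ell}\}$ for $\ell=0,\dots,v(q)-1$, and each such circle is in turn a disjoint union of balls of smaller radius, giving a natural rooted tree of balls whose terminal nodes lie inside $A$. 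Declare $L^2(E_q(K))_\sigma$ to be the span of the normalised indicator functions $\mathbf{1}_{S_\ell(0)}$ and $L^2(E_q(K))_0$ to be the span of the pull-backs of Kozyrev wavelets supported in balls strictly contained inside one of the circles $S_\ell(0)$. Standard Haar-basis arguments show these two subspaces are orthogonal complements in $L^2(E_q(K),\absolute{\omega})$, since a Kozyrev wavelet has zero integral against $\absolute{\omega}$ on its supporting circle and every square-integrable function can be refined along the tree of balls.

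Next, I would establish invariance of each summand under $\mathcal{H}_\theta$. Because the kernel is constructed from absolute values of theta functions and those functions are $q^{\mathds{Z}}$-quasi-periodic, the kernel descends to a symmetric function on $E_q(K)\times E_q(K)$; I would then exploit two features of the absolute value of a theta function on $K^\times$, both of which are consequences of the ultrametric triangle inequality applied to the convergent theta product: first, the kernel is constant in each variable along any ball strictly contained in a single circle $S_\ell(0)$ as long as the other variable stays outside that ball; second, after integration over a full circle against $\absolute{\omega}$, the value depends only on the radius of the remaining argument. The first property implies that applying $\mathcal{H}_\theta$ to a Kozyrev wavelet is a function of zero mean on the same circle, hence $L^2(E_q(K))_0$ is invariant; the second property shows that functions constant on each $S_\ell(0)$ are sent to functions with the same symmetry, giving invariance of $L^2(E_q(K))_\sigma$. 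Self-adjointness of $\mathcal{H}_\theta$ then yields the orthogonal decomposition as required.

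On the finite-dimensional piece $L^2(E_q(K))_\sigma\cong \mathds{C}^{v(q)}$, the operator is represented by the Gram-type matrix whose $(\ell,m)$-entry is the integral of the kernel over $S_\ell(0)\times S_m(0)$, normalised by the measures of the circles. This matrix is Hermitian and hence furnishes $v(q)$ real eigenvalues; their explicit identification is a bookkeeping computation on a $v(q)\times v(q)$ matrix whose entries are closed-form expressions involving $|\theta|$ integrated over spheres. The main work of the theorem then reduces to the wavelet piece.

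The decisive step, and the expected main obstacle, is to verify that each pull-back Kozyrev wavelet $\psi$ supported in a ball $B\subset S_\ell(0)$ of radius $p^{-f(k+1)}$ is actually an eigenfunction of $\mathcal{H}_\theta$ with an eigenvalue determined solely by the ``degree'' $k$ of $\psi$. I would split
\[
\mathcal{H}_\theta\psi(x)=\int_{B}\!\kappa_\theta(x,y)\psi(y)\,\absolute{\omega}(y)+\int_{E_q(K)\setminus B}\!\kappa_\theta(x,y)\psi(y)\,\absolute{\omega}(y),
\]
observing that the second integrand vanishes for $x\in B$ because $\kappa_\theta(x,y)$ is constant in $x\in B$ whenever $y\notin B$ (ultrametric rigidity of the theta product) while $\psi$ has zero mean, and reducing the first integral to a local computation on $B$ whose absolute-value structure coincides with a Vladimirov-type radial kernel up to an additive constant absorbed by the mean-zero property of $\psi$. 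The resulting eigenvalue, negative in sign by the sub-harmonicity encoded in the Chambert-Loir interpretation mentioned in the introduction, depends only on the geometric level $k$ of the ball $B$ within the fundamental annulus. Once this local computation is carried out, assembling the spectrum is immediate, and the theorem follows by combining the matrix eigenvalues from $L^2(E_q(K))_\sigma$ with the countable but level-indexed family of wavelet eigenvalues from $L^2(E_q(K))_0$.
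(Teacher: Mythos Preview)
Your overall strategy is exactly the one the paper uses: split $L^2(E_q(K),\absolute{\omega})$ into the span of the circle indicators $\eta_\ell$ and its orthogonal complement spanned by the pulled-back Kozyrev wavelets, represent $\mathcal{H}_\theta$ on the $v(q)$-dimensional piece by the matrix $L$ of Lemma~\ref{matrixElements}/(\ref{helpfulMatrix}), and check that each wavelet is an eigenfunction. The paper invokes the known completeness of the Kozyrev system on balls (citing \cite{ZunigaEigen}) where you sketch a direct Haar-type argument, but the architecture is the same.

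There is, however, a genuine slip in your wavelet computation. You write
\[
\mathcal{H}_\theta\psi(x)=\int_{B}\kappa_\theta(x,y)\psi(y)\,\absolute{\omega}(y)+\int_{E_q(K)\setminus B}\kappa_\theta(x,y)\psi(y)\,\absolute{\omega}(y),
\]
but this is only the adjacency operator $\mathcal{A}_\theta\psi$; the defining formula for $\mathcal{H}_\theta$ carries the extra term $-\psi(x)\deg_{\mathcal{H}_\theta}(x)$, which you have dropped. This causes two problems. First, your justification for the vanishing of the second integral is misapplied: for $x\in B$ that integral is zero simply because $\psi$ vanishes off $B$; the ``constancy of $\kappa_\theta(x,\cdot)$ plus zero mean of $\psi$'' argument is precisely what is needed in the \emph{other} case $x\notin B$, to show that the first integral (and hence $\mathcal{H}_\theta\psi(x)$) vanishes there. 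Second, and more seriously, once the degree term is restored the eigenvalue attached to a wavelet supported in $B\subset S_\ell(0)$ is $-\deg_{\mathcal{H}_\theta}(x)$, and Corollary~\ref{degreeFunction} shows this quantity depends only on the circle index $\ell=v(x)$. Your conclusion that the eigenvalue is determined ``solely by the degree $k$'' (the scale of the ball $B$) therefore tracks the wrong parameter; in the paper's formulation the wavelet eigenvalues are indexed by the $v(q)$ circles in the fundamental annulus, not by the depth of the supporting ball.
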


This unsurprising result actually comes from viewing the Hilbert space $L^2(E_q(K),\absolute{\omega})$ as a decomposition into a direct sum of $L^2$-spaces of finitely many $p$-adic discs, each endowed with an individually scaled Haar measure coming from the differential form $\omega$. What is new is that here the degree eigenvalues are grouped according to the circles contained inside the annulus forming a fundamental domain for $E_q(K)$. Hence, there is not only an infinite part of their ``multiplicities'' coming from shrinking the wavelets, but also an inherent finite part coming from a partitioning of circles into finitely many maximal subdiscs.
\newline

The next result is about very general $p$-adic operators on $K$-analytic manifolds of any finite dimension giving rise to Feller semigroups and solutions to corresponding heat equations under some technical assumptions.
And the operators are more general of the form
\begin{align*}
\mathcal{J}f(x)=\int_X
\left\{j(x,y)f(y)-j(y,x)f(x)\right\}\absolute{\omega(y)}
\end{align*}
as considered in \cite{ZunigaEigen,ZunigaRugged}, where the kernel function $j(x,y)$ is such that the positive maximum principle is satisfied.

\begin{Thm}\label{secondTheorem}
(Under some technical assumptions) There exists a probability measure $p_t(x,\cdot)$ with $t\ge0$, $x\in X$, on the Borel $\sigma$-algebra of $X$ such that the Cauchy problem 
\begin{align*}
u(\cdot,t)&\in C^1\left([0,\tau],C_0(X,\mathds{R})\right)
\\
\frac{\partial}{\partial t}u(x,t)&
=\int_X \left(j(x,y)u(y,t)-j(y,x)u(x,t)\right)\absolute{\omega(t)},&t\in[0,\tau],\;x\in X
\\
u(x,0)&=u_0(x)\in C_0(X,\mathds{R})
\end{align*}
has a unique solution of the form
\[
h(t,x)=\int_{X}h_0(y)\,p_t(x,\absolute{\omega(y)})
\]
In addition, $p_t(x,\cdot)$ is the transition function of a strong Markov process whose paths are right continuous and have no discontinuities other than jumps.
\end{Thm}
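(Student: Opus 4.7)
The plan is to identify $\mathcal{J}$ as the infinitesimal generator of a Feller semigroup on $C_0(X,\mathds{R})$ via the Hille-Yosida-Ray theorem and then read off the transition function $p_t(x,\cdot)$ and the strong Markov process from standard probabilistic theory, following the template used by Z\'u\~niga-Galindo in the works cited in the statement.

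First I would verify that, under the technical assumptions, $\mathcal{J}$ is a bounded linear operator on $C_0(X,\mathds{R})$: continuity of $\mathcal{J}f$ for $f\in C_0$ follows by dominated convergence from uniform integrability of $j(x,\cdot)$ and $j(\cdot,x)$ against $\absolute{\omega}$, while vanishing at infinity of $\mathcal{J}f$ follows from a tightness-type decay condition on the kernel far off the diagonal. The positive maximum principle holds for $\mathcal{J}$ by hypothesis. Combined with boundedness, the bounded-operator version of Hille-Yosida-Ray (essentially Lumer-Phillips) then identifies
\[
T_t:=\exp(t\mathcal{J})=\sum_{n\ge 0}\frac{(t\mathcal{J})^n}{n!}
\]
as a strongly continuous positive contraction semigroup on $C_0(X,\mathds{R})$, i.e.\ a Feller semigroup.

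Since $\mathcal{J}$ is bounded, $u(x,t):=(T_tu_0)(x)$ is automatically of class $C^1([0,\tau],C_0(X,\mathds{R}))$, solves $\partial_t u=\mathcal{J}u$ with $u(\cdot,0)=u_0$, and uniqueness follows from the contraction estimate $\norm{T_t}\le 1$ via Gronwall. For the probabilistic side, the Riesz representation theorem applied to the positive functional $f\mapsto T_tf(x)$ yields, for every $t\ge 0$ and $x\in X$, a subprobability measure $p_t(x,\cdot)$ on the Borel $\sigma$-algebra of $X$ with $T_tf(x)=\int_X f(y)\,p_t(x,dy)$; the semigroup property becomes the Chapman-Kolmogorov equation, and conservativeness of $\mathcal{J}$ (i.e.\ $\mathcal{J}1=0$ in the appropriate limiting sense, as captured by the balance $\int_X j(x,y)\absolute{\omega(y)}=\int_X j(y,x)\absolute{\omega(y)}$) upgrades each $p_t(x,\cdot)$ to a probability measure. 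The standard theory of Feller semigroups on locally compact Polish spaces (Ethier-Kurtz, or Dynkin) then produces a Hunt process with $p_t(x,\cdot)$ as transition function, whose paths are c\`adl\`ag and whose only discontinuities are jumps.

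The main obstacle I anticipate is the preservation of $C_0(X,\mathds{R})$ by $\mathcal{J}$ when $X$ is non-compact, since vanishing of $\mathcal{J}f$ at infinity is not automatic and requires nontrivial decay hypotheses on $j(x,y)$ away from the diagonal, which must be extracted from the technical assumptions. A secondary delicate point is the bookkeeping needed to confirm that $p_t(x,\cdot)$ is a genuine probability rather than a subprobability measure, for which one applies $T_t$ to indicators $\mathbf{1}_{K_n}$ of a compact exhaustion and passes to the limit using the conservativeness of the kernel.
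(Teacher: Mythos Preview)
Your overall architecture matches the paper's: invoke Hille--Yosida--Ray to obtain a Feller semigroup, then pass to the transition function and the strong Markov process via the standard correspondence (the paper cites Taira's Theorems 2.10, 2.12, 2.15 for the last step, exactly as you suggest via Ethier--Kurtz/Dynkin). The positive maximum principle is also handled the same way.

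The genuine gap is in the range condition. You propose to verify that $\mathcal{J}$ is a \emph{bounded} operator on $C_0(X,\mathds{R})$ and then obtain the semigroup as the convergent exponential series $\exp(t\mathcal{J})$. But the technical assumptions do not yield boundedness: the kernel $j(x,y)$ is explicitly allowed to have poles (Assumption~\ref{assumptionPoles} only asks that the pole set be nowhere dense and of $\absolute{\omega}\wedge\absolute{\omega}$-measure zero), and Assumption~\ref{assumptionDegree} only gives finiteness of the \emph{truncated} degrees $\deg_k(z)$, not of the full degree $\deg(z)=\int_X j(y,z)\absolute{\omega(y)}$. The paper is explicit about this: ``Since $\mathcal{J}$ could be unbounded, a proof as in \cite[Thm.~3.1]{ZunigaRugged} does not cover all cases here.'' Instead, the density of $\rank(\eta I-\mathcal{J})$ is established by an approximation argument: one excises neighbourhoods $U_k(z)$ of the poles to form bounded operators $T_k$ with $\norm{T_k}<1$ for $k\gg 0$, solves $(I-T_k)u_k = h/(\eta-\deg_k)$, and then shows that the $u_k$ form a Cauchy sequence in $C_0(X,\mathds{R})$ whose limit solves $(\eta I-\mathcal{J})u=h$. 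This is the substantive content of Theorem~\ref{AssumptionsFeller}, and your sketch bypasses it.

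A smaller point: your conservativeness argument for upgrading $p_t(x,\cdot)$ from subprobability to probability invokes a balance $\int_X j(x,y)\absolute{\omega(y)}=\int_X j(y,x)\absolute{\omega(y)}$, but the hypothesis (\ref{hypothesis}) is only the one-sided inequality $j_-(x,y)\le j_+(y,x)$, so this identity is not available in general.
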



As a corollary, it then follows  that the new operator on $E_q(K)$ also  describes a strong Markov process on $E_q(K)$ whose paths are right continuous and have no discontinuities other than jumps. The Cauchy problem for the corresponding heat equation has a positive answer depending uniquely on the initial condition in $C(E_q(K))$. The spectrum of the operator acting on $L^2(K)$ consists of finitely many eigenvalues, a part of which correspond to eigenfunctions of the Laplacian of a complete finite graph, and the other part comes from an infinite family of functions which restrict to the well-known $p$-adic Kozyrev wavelets supported inside a fundamental domain of $E_q(K)$, which is an annulus in $K$. Because of the transitions on a complete graph, one can view this new operator as non-local on a Tate curve, which is in contrast to the ones from \cite{brad_heatMumf} on Mumford curves.
\newline

The general Theorem \ref{secondTheorem}, inspired by biology, cf.\ \cite{ZunigaEigen}, now begs the question for further research, whether or how it is possible to extract information about special $K$-rational points on elliptic curves via the study of a directed diffusion process.
\newline

Coming back to self-adjoint operators, it is shown  that  $\mathcal{H}_\theta$ can be extended to an operator on   $E_q^{an}$ with corresponding properties on $L^2(E_q^{an})$, cf.\ 
Corollary \ref{MarkovProcessBerkovichTateCurve}. And this extended operator $\mathcal{H}_{\theta,\sigma}$ can  be written as integration against the Chambert-Loir measure
$c_1(\mathcal{L},\norm{\cdot}_{\theta,x})$
for a suitably metrised ample line bundle $\mathcal{L}$:
\begin{Thm}\label{thirdTheorem}
The heat operator 
$\mathcal{H}_{\theta,\sigma}$ on $E_q^{an} (K)$ is obtained as an integral
operator of the form
\[
\mathcal{H}_{\theta,\sigma}\psi(x)=
\int_{E_q(K)^{an}}
\psi\, c_1\left(\mathcal{L},\norm{\cdot}_{\theta,x}\right)
\]
with metric
$\norm{\cdot}_{\theta,x}=e^{-g_{\theta,x}}$ and
\[
g_{\theta,x}=\sum\limits_{z\in\sigma(E_q^{an}(K))}
\alpha_zg_z
\]
where $\alpha_z>0$ and $g_z$ is a continuous subharmonic function on $E_q^{an}$ such that
\[
c_1\left(\mathcal{L},\norm{\cdot}_z\right)=\delta_z
\]
is a Dirac measure for $\norm{\cdot}_z=e^{-g_z}$, $y\in \sigma(E_q^{an}(K))\sqcup\sigma(x)$, and $x\in E_q^{an}(K)$.
\end{Thm}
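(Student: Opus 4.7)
The plan is to extract from Corollary \ref{MarkovProcessBerkovichTateCurve} the explicit shape of $\mathcal{H}_{\theta,\sigma}$ as an integral operator on the Berkovich analytification and then reinterpret its underlying measure through Thuillier's solution of the non-archimedean Calabi--Yau problem. First, for each $x\in E_q^{an}(K)$ I would identify the positive Radon measure $\mu_x$ on $E_q^{an}$ against which $\mathcal{H}_{\theta,\sigma}$ integrates. Since $\mathcal{H}_{\theta,\sigma}$ is obtained by push-forward of $\mathcal{H}_\theta$ along the retraction to the skeleton, the measure $\mu_x$ is supported on the finite set $\sigma(E_q^{an}(K))\sqcup\{\sigma(x)\}$ and decomposes as
\[
\mu_x \;=\; \sum_{z}\alpha_z\,\delta_z,
\]
a finite positive combination of Dirac masses whose weights $\alpha_z=\alpha_z(x)>0$ can be read off directly from the values of the theta kernel at the pair $(x,z)$.

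Second, I would fix an ample line bundle $\mathcal{L}$ on $E_q$ of degree sufficient to accommodate the total mass of $\mu_x$ (passing to a suitable multiple if necessary, which is harmless since the formula is homogeneous in $\mathcal{L}$). Thuillier's resolution of the Monge--Amp\`ere equation on smooth strictly $K$-analytic curves \cite{Thuillier2005} then provides, for each skeletal point $z$, a continuous subharmonic metric $\norm{\cdot}_z = e^{-g_z}$ on $\mathcal{L}$ with
\[
c_1\!\left(\mathcal{L},\norm{\cdot}_z\right) \;=\; \delta_z.
\]

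Third, I would invoke the fact that, since $\dim E_q=1$, no self-intersection arises in the definition of the first Chern current, so the assignment $g\mapsto c_1(\mathcal{L},e^{-g})$ is $\mathds{R}$-linear. Setting
\[
g_{\theta,x} \;=\; \sum_{z\in\sigma(E_q^{an}(K))}\alpha_z\,g_z,
\qquad
\norm{\cdot}_{\theta,x} \;=\; e^{-g_{\theta,x}},
\]
therefore yields a continuous subharmonic metric on $\mathcal{L}$ whose Chambert-Loir measure equals $\mu_x$, and substituting this identification into the kernel representation of $\mathcal{H}_{\theta,\sigma}$ produces the asserted formula.

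The principal difficulty I anticipate is a careful verification that the weights $\alpha_z$ produced by the theta kernel are genuinely non-negative for every $x\in E_q^{an}(K)$; this requires tracking signs through the symmetrisation and retraction steps that define $\mathcal{H}_{\theta,\sigma}$, and absorbing any diagonal contribution into the Dirac mass at $\sigma(x)$ (which explains the extra index point in the theorem). A secondary technical point is confirming that the sum $\sum_z\alpha_z g_z$ of Thuillier potentials remains continuous and subharmonic on $E_q^{an}$, which follows from the convex-cone structure of continuous subharmonic functions in the sense of \cite{Thuillier2005}, but which must be checked explicitly because the summation index depends on $x$.
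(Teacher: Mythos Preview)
Your proposal is correct and follows essentially the same route as the paper: the paper reduces the statement to Proposition~\ref{heatMeasureCL}, whose proof consists of (i) observing that $\nu_{\sigma(x)}$ is a finite linear combination of Dirac measures supported on $\sigma(E_q(K))$, with coefficients $\alpha_z$ identified as the entries of the helpful matrix $L$ of~(\ref{helpfulMatrix}), and (ii) invoking Thuillier's \cite[Lem.~3.4.14]{Thuillier2005} to write each $\delta_y$ as $\ddc^c(g_y)$ for a continuous subharmonic $g_y$, so that $\norm{\cdot}_y=e^{-g_y}$ gives the desired curvature form. Your two anticipated difficulties are handled in the paper rather tersely: positivity of the $\alpha_z$ is not argued separately but is implicit in Lemma~\ref{matrixElements}, where the off-diagonal entries of $A_\sigma$ are computed explicitly and seen to be strictly positive; and the subharmonicity of the finite positive combination $\sum_z\alpha_z g_z$ is taken for granted (it is indeed immediate from the cone property you cite).
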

Here, $\sigma$ is the retraction map of $E_q^{an}$ onto its skeleton.
\newline

A potential application outside of pure mathematics is seen in the analysis of topological data, in particular in the context of the ongoing DFG project \emph{Distributed Simulation of Processes in
Buildings and City Models}. In this context, an envisioned  heat flow approach could lead to the possibility of verifying the topological correctness of CAD models obtained from point clouds on distributed computing systems.
\newline

The following section introduces the invariant measure $\absolute{\omega}$, explains how to construct a kernel function from theta functions, and studies the spectrum of the diffusion operator.
The heat equation on  $E_q(K)$, and the development of a diffusion process on
$E_q^{an}$ and its relationship to Chambert-Loir measures
is treated in the third section. The short last section is devoted to extracting information about the Tate curve $E_q(K)$ from the spectrum.
\newline

Some words about notation are in order. The non-archimedean local field used  here is denoted as $K$. Its local ring is $O_K$ and has a unique maximal ideal $\mathfrak{m}_K$. The uniformiser of $K$ is denoted as $\pi$ and the absolute value $\absolute{\cdot}$ on $K$ is scaled such that
\[
\absolute{\pi}=p^{-f}
\]
where $f$ is the degree of the extension of the residue field $O_K/\mathfrak{m}_K$ over the finite field $\mathds{F}_p$ with $p$ elements. The prime $p$ is assumed not to be $2$ or $3$, as this is needed occasionally,  in order to 
not have to deal with the intricacies occurring with these primes. The Haar measure on $K$ is denoted as $\mu_K$, but also as $\absolute{dx}$, if $x$ is the variable of integration. The reason for writing it in this way is because we are also working with measures $\absolute{\omega}$ coming from a differential $1$-form, and these are locally written as 
\[
\omega|_U=f\,dx
\]
where $f$ is a function defined on an open piece $U$ of the space under consideration.
In this case, the measure is locally written as
\[
\absolute{\omega(x)}=\absolute{f(x)}\absolute{dx}
\]
if $x\in U$ is the variable of integration.
Indicator functions will be written as
\[
\Omega(x\in B)
\]
where $B$ is a measurable set,
or as 
\[
\Omega(\absolute{f(x)}):=
\Omega(x\in B(f))
\]
with
\[
B(f)=\mathset{x\in K\mid\absolute{f(x)}\le 1}
\]
for some function $f\colon K\to K$.
\newline

The following function spaces are used:
\begin{align*}
\mathcal{D}(X)&=\mathset{f\colon X\to\mathds{C}\mid \text{$f$ is locally constant with compact support}}
\\
C(X)&=\mathset{f\colon X\to\mathds{C}\mid\text{$f$ is continuous}}
\\
L^2(X,\nu)&=\mathset{f\colon X\to \mathds{C}\mid\text{$f$ is square integrable w.r.t.\ $\nu$}}
\end{align*}
where $\nu$ is a positive Borel measure on a topological space $X$.
The elements of $\mathcal{D}(X)$ are also called \emph{test functions} on $X$.
In the situation of this article, the space of test functions is dense in the other two. Here, the space $C(X)$ is a Banach space w.r.t.\ the supremum norm $\norm{\cdot}_\infty$, and the second space is a Hilbert space w.r.t.\ the pairing induced by $\nu$.
\newline

An introduction to the theory of Berkovich analytic spaces can be found e.g.\ in \cite{Temkin2015} or, of course, \cite{Berkovich1990}. The results of that theory needed here are the existence of a skeleton $I(X)$ of the Berkovich-analytification  $X^{an}$ of a smooth projective variety $X$ over $K$, and a map
\[
\sigma\colon X^{an}\to I(X)
\]
which is a deformation retraction. The topology of $X^{an}$ is Hausdorff, and $X^{an}$ is locally path-connected \cite{Berkovich1999}. 
In the $1$-dimensional case, the skeleton $I(X^{an})$ is a metrised graph which for the Tate curve can be identified with a circle. 

\section{Diffusion on the Rational Points of a Tate Elliptic Curve}

From
\cite{Tate1995,Silverman1994} and \cite[Ch.\ 5.1]{FP2004}, one can take a comprehensive view on Tate's elliptic curves. Invariant functions on Tate curves are constructed in \cite{Tate1995} and in \cite[Ch.\ 5.1]{FP2004} via Theta functions.
This allows to explicitly construct a measure and a kernel function.

\begin{thm}[Tate's Uniformisation of Tate Curves]\label{TateUniformisation}
Let $q\in K^\times$, $\absolute{q}<1$. Set
\[
s_k(q)=\sum\limits_{n\ge 1}\frac{n^kq^n}{1-q^n},\quad
a_4(q)=-5s_3(1),\quad
a_6(q)=\frac{-5s_3(q)+7s_5(q)}{12}
\]
Then $a_4(q)$ and $a_6(q)$ converge in $K$. The equation
\[
E_q\colon y^2+xy=x^3+a_4(q)+a_6(q)
\]
defines an elliptic curve with discriminant and $j$-invariant
\begin{align*}
\Delta(E_q)&=q\prod\limits_{n\ge1}(1-q^n)^{24}
\\
j(E_q)&=\frac{1}{q}+\sum\limits_{n\ge 0}c(n)q^n
\end{align*}
with $c(n)\in\mathds{Z}$.
\end{thm}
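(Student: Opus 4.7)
The plan is to establish the three claims---convergence of $s_k(q)$, the elliptic nature of $E_q$, and the explicit $q$-expansion formulas for $\Delta(E_q)$ and $j(E_q)$---in that order. The theorem is classical and can be quoted from \cite{Tate1995,Silverman1994,FP2004}; I outline how a self-contained proof would run.

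Convergence of $s_k(q)$ is immediate from the ultrametric nature of $K$. Every rational integer $n$ satisfies $\absolute{n}\le 1$, and from $\absolute{q}<1$ one obtains $\absolute{q^n}<1$, hence $\absolute{1-q^n}=1$ by the ultrametric inequality. Thus
\[
\absolute{\frac{n^kq^n}{1-q^n}}\le \absolute{q}^n\xrightarrow[n\to\infty]{}0,
\]
so the series converges in the complete field $K$, and $a_4(q),a_6(q)\in K$ are well-defined.

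Granting for a moment the product expansion $\Delta(E_q)=q\prod_{n\ge 1}(1-q^n)^{24}$, each factor $1-q^n$ has absolute value $1$, so $\absolute{\Delta(E_q)}=\absolute{q}>0$, and the Weierstrass equation indeed defines an elliptic curve. The real content of the theorem is therefore the joint derivation of this product formula and of the companion $q$-expansion for $j(E_q)$.

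For these, I would pass to a formal-power-series identity over $\mathds{Q}$. Substituting $a_4(q)$ and $a_6(q)$ into the standard polynomial expressions for $b_i$, $c_4$, $c_6$, $\Delta=(c_4^3-c_6^2)/1728$ and $j=c_4^3/\Delta$ yields elements of $\mathds{Q}[[q]]$ (respectively $\mathds{Q}[[q]][q^{-1}]$) which one recognises as the $q$-expansions of modular forms built from Eisenstein series. The required identities then reduce to the classical formulas for the modular discriminant and the $j$-function at the cusp. The main obstacle is the formal identity $\Delta=q\prod_{n\ge 1}(1-q^n)^{24}$ itself: this is essentially the Jacobi triple-product / Dedekind $\eta$-function formula, whose standard proofs go either through direct theta-function manipulation or through the one-dimensionality of the space of weight-$12$, level-$1$ cusp forms. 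Once the identities hold as formal identities in $\mathds{Q}[[q]]$, the non-archimedean convergence established above promotes them to genuine equalities in $K$.
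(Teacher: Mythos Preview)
The paper's own proof is merely a citation to \cite[Thm.\ V.3.1]{Silverman1994}, \cite[\S3 (37)]{Roquette1970}, and \cite[Thm.\ 5.1.10]{FP2004}, together with a remark that Silverman's printed formula for $a_4(q)$ omits the coefficient~$5$. Your proposal likewise defers to the standard references and then supplies a correct sketch of the convergence argument and of the modular-forms derivation of the $\Delta$- and $j$-expansions, so it strictly subsumes the paper's treatment.
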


\begin{proof}
\cite[1, Thm.\ V.3.1]{Silverman1994}. Observe the typo in Silverman's book: the coefficient $5$ in $a_4(q)$ is missing there. Consult also \cite[\S3 (37)]{Roquette1970} or \cite[Thm.\ 5.1.10]{FP2004}.
\end{proof}

\begin{Lemma}\label{doubleAnnulus}
If
\[
\absolute{q}<\absolute{u}<\absolute{q}^{-1}
\]
then the coordinates of $E_q$ can be written as
\begin{align*}
X(u,q)&=\frac{u}{(1-u)^2}
+\sum\limits_{d\ge 1}\left(
\sum\limits_{m\mid d} m(u^m+u^{-m}-2)
\right)q^d
\\
Y(u,q)&=\frac{u^2}{(1-u)^3}
+\sum\limits_{d\ge 1}
\left(
\sum\limits_{m\mid d}
\left(
\frac{(m-1)m}{2}u^m
-\frac{m(m+1)}{2}u^{-m}+m
\right)
\right)q^2
\end{align*}
\end{Lemma}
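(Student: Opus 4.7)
The plan is to start from Tate's classical lattice-sum expressions for the uniformising coordinates of $E_q$, namely
\[
X(u,q)=\sum\limits_{n\in\mathds{Z}}\frac{q^nu}{(1-q^nu)^2}-2s_1(q),\quad
Y(u,q)=\sum\limits_{n\in\mathds{Z}}\frac{(q^nu)^2}{(1-q^nu)^3}+s_1(q),
\]
and to expand each term into a double power series in $q$ and a Laurent series in $u$ that converges in the prescribed double annulus. Since $K$ is non-archimedean, absolute convergence will make any rearrangement of the ensuing double sums automatic; the only non-trivial input is the range of convergence of the various geometric series.

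First I would isolate the $n=0$ contribution, which is exactly the $\frac{u}{(1-u)^2}$ (respectively $\frac{u^2}{(1-u)^3}$) prefactor in the statement. For $n\ge 1$ the estimate $\absolute{q^nu}=\absolute{q}^n\absolute{u}<\absolute{q}^{n-1}\le 1$ allows the expansions
\[
\frac{q^nu}{(1-q^nu)^2}=\sum\limits_{m\ge 1}m\,q^{nm}u^m,\quad
\frac{(q^nu)^2}{(1-q^nu)^3}=\sum\limits_{m\ge 1}\frac{m(m-1)}{2}\,q^{nm}u^m.
\]
For $n=-k\le -1$, the condition $\absolute{u}<\absolute{q}^{-1}$ yields $\absolute{q^ku^{-1}}<1$, and the elementary identities
\[
\frac{q^{-k}u}{(1-q^{-k}u)^2}=\frac{q^ku^{-1}}{(1-q^ku^{-1})^2},\quad
\frac{(q^{-k}u)^2}{(1-q^{-k}u)^3}=-\frac{q^ku^{-1}}{(1-q^ku^{-1})^3}
\]
(obtained by factoring $-q^{-k}u$ from $1-q^{-k}u$) turn these negative-$n$ contributions into
\[
\sum\limits_{m\ge 1}m\,q^{km}u^{-m},\qquad -\sum\limits_{m\ge 1}\frac{m(m+1)}{2}\,q^{km}u^{-m}.
\]
This is the asymmetry between the $u^m$ and $u^{-m}$ coefficients observed in the statement of the lemma for $Y$, and is the one step where I would be most careful with signs and indices.

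Finally, I would expand the correction term
\[
s_1(q)=\sum\limits_{n\ge 1}\frac{nq^n}{1-q^n}=\sum\limits_{n,j\ge 1}n\,q^{nj}
\]
and collect all contributions by the total power $d=nm$ (respectively $d=km$, $d=nj$). For each fixed $d$, summing over the pairs $(n,m)$ with $nm=d$ reorganises as a sum over divisors $m\mid d$ with $n=d/m$, producing precisely
\[
\sum\limits_{m\mid d}m(u^m+u^{-m}-2)
\]
for the $X$-coordinate, and
\[
\sum\limits_{m\mid d}\left(\frac{(m-1)m}{2}u^m-\frac{m(m+1)}{2}u^{-m}+m\right)
\]
for the $Y$-coordinate, where the $-2$ and $+m$ terms come from $-2s_1(q)$ and $+s_1(q)$ respectively. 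The main obstacle is book-keeping: ensuring that each factor $(m-1)m/2$ or $m(m+1)/2$ is traced back to the correct geometric expansion on the right-hand side. The convergence and rearrangement questions are routine non-archimedean estimates given that $\absolute{q}<\absolute{u}<\absolute{q}^{-1}$.
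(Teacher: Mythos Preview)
Your derivation is correct: the lattice-sum definitions of $X(u,q)$ and $Y(u,q)$, the geometric expansions for $n\ge 1$ and the sign-flip identity for $n\le -1$, and the divisor-sum reorganisation all check out, including the asymmetric $(m-1)m/2$ versus $m(m+1)/2$ coefficients in $Y$. The paper itself does not give a proof at all but simply refers to \cite[Ch.\ V.3]{Silverman1994}; your argument is precisely the computation that underlies that reference, so there is no discrepancy in approach, only in level of detail.
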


\begin{proof}
\cite[Ch.\ V.3]{Silverman1994}.
\end{proof}

\begin{thm}[Tate]
The Tate elliptic curve $E_q$ can be written as
\[
E_q=\mathds{C}^\times_p/q^{\mathds{Z}}
\]
with $q\in K^\times$ such that $\absolute{q}<1$. The meromorphic functions on $E_q(K)$ are precisely the $q^{\mathds{Z}}$-invariant meromorphic functions on $K^\times$.
\end{thm}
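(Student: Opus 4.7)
The plan is to use the explicit Laurent series of Lemma \ref{doubleAnnulus} as the uniformising map and verify the two claims in turn. Define
\[
\varphi\colon K^\times\longrightarrow E_q(K),\qquad u\mapsto (X(u,q),Y(u,q))
\]
on the double annulus $\absolute{q}<\absolute{u}<\absolute{q}^{-1}$, extend it to all of $K^\times$ by $q^{\mathds{Z}}$-periodicity, and send $u\in q^{\mathds{Z}}$ to the origin of $E_q$. The first step is to verify that $(X(u,q),Y(u,q))$ satisfies the Weierstrass equation of Theorem \ref{TateUniformisation} by a formal identity of Laurent series in $u$ with power-series coefficients in $q$, comparing coefficients order by order; convergence on the double annulus is immediate from the bounds on $\absolute{u}$. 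The same reindexing of the double sums defining $X$ and $Y$ proves invariance under $u\mapsto qu$, so $\varphi$ descends to $\bar\varphi\colon K^\times/q^{\mathds{Z}}\to E_q(K)$.

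Next I would establish that $\bar\varphi$ is a bijective group homomorphism. Injectivity follows from the leading term $u/(1-u)^2$ of $X$ together with the leading term of $Y$, which separate all classes in the fundamental annulus $\absolute{q}<\absolute{u}\le 1$, and surjectivity by Laurent-series inversion of $X,Y$ near the origin of $E_q$. The group-law compatibility is the nontrivial step: one exhibits a $q^{\mathds{Z}}$-invariant meromorphic function on $K^\times$ whose divisor witnesses the identity $\varphi(u_1u_2)=\varphi(u_1)+\varphi(u_2)$, thereby translating multiplication on $K^\times/q^{\mathds{Z}}$ into the chord-and-tangent law on $E_q$; this is carried out in \cite[Ch.\ V.3]{Silverman1994} and \cite[Thm.\ 5.1.10]{FP2004}.

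Finally, the assertion on meromorphic function fields is obtained by the same theta-function machinery. Pullback along $\varphi$ clearly sends meromorphic functions on $E_q$ to $q^{\mathds{Z}}$-invariant meromorphic functions on $K^\times$. For the converse, for every divisor $D$ on $E_q$ I would lift $D$ to a $q^{\mathds{Z}}$-invariant divisor on $K^\times$ and realise it as the divisor of a ratio of shifted theta series built from
\[
\theta(u,q)=\prod_{n\ge 0}(1-q^n u)\prod_{n\ge 1}(1-q^n u^{-1}),
\]
producing an invariant meromorphic function on $K^\times$ that descends to the desired function on $E_q$. The main obstacle is the group-law compatibility in the middle step, since matching the abstract multiplicative structure on $K^\times/q^{\mathds{Z}}$ with the geometric addition on $E_q$ is precisely what forces one into the theta-function divisor bookkeeping that also powers the final step.
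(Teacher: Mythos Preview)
The paper itself does not prove this statement: its ``proof'' is a bare citation to \cite[Thm.\ 1]{Tate1995} and \cite[Thm.\ 5.1.4, Thm.\ 5.1.10]{FP2004}. Your outline is precisely the argument carried out in those references, so in substance you are aligned with the paper---you are simply unpacking what it chose to cite.

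One step in your sketch is underspecified and would not go through as written. You claim surjectivity of $\bar\varphi$ ``by Laurent-series inversion of $X,Y$ near the origin of $E_q$''. That only gives a local inverse on a neighbourhood of the origin; it does not by itself show that every $K$-rational point of $E_q$ is hit. In the cited sources, surjectivity is obtained either by a direct case analysis on $\absolute{x}$ for a given point $(x,y)\in E_q(K)$, solving explicitly for $u$ in the appropriate annular region, or by comparing degrees once $\varphi$ is known to be a nonconstant morphism of complete curves (over an algebraically closed extension) and then descending. Either route requires more than local invertibility at one point.

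A minor note: your theta product has the factors $u$ and $u^{-1}$ swapped relative to the paper's convention for $\theta(z)$; this is harmless but worth harmonising before the function-field argument.
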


\begin{proof}
\cite[Thm.\ 1]{Tate1995}, or \cite[Thm.\ 5.1.4]{FP2004} together with the Weierstrass model for $\mathds{C}_p/q^{\mathds{Z}}$ \cite[Thm.\ 5.1.10]{FP2004}.
\end{proof}

\subsection{Measure on $E_q(K)$}

Any non-singular projective algebraic curve $X$ of positive genus has  regular differential $1$-forms. If $X$ is defined over a non-archimdean local field $K$, 
a regular differential $1$-form $\omega$ on $X$ gives rise to a positive measure $\absolute{\omega}$ on the space $X(K)$ of $K$-rational points outside the vanishing locus of $\omega$ \cite[Ch.\ 7.4]{IgusaLocalZeta}. A.\ Weil calls this a \emph{gauge measure} \cite[Ch.\ II.2.2]{WeilAAG}.  
Since the zero set of $\omega$ locally on each chart $U\to K$ has Haar measure zero \cite[Lem.\ 3.1]{Yasuda2017}, one has a natural extension of $\absolute{\omega}$ to all of $X(K)$, where it locally on a chart $U$ takes the form
\[
\int_U\absolute{\omega}=\int_{U'}\absolute{f_U}d\mu_{K}
\] 
with $\mu_K$ the Haar measure on $K$, and $\omega|_U=f_U\,dx$ for some $K$-analytic function  $f_U$ with local coordinate $x$.
Notice that in this description here, it is assumed that $X(K)$ is a $1$-dimensional $K$-analytic manifold, which is the case only if $X$ is a Mumford curve.
\newline

The Haar measure on $K$ will also be written as 
\[
\mu_K=\absolute{dx}
\]
in order to make visible the local coordinate $x$, and in order to distinguish it from the differential $1$-form $dx$ on $\mathds{A}^1_K$. In the case of the measure $\absolute{\omega}$, this will be effected by writing
\[
\absolute{\omega(x)}
\]
in particular when a clarification of the variable of integration is needed.
\newline

Returning to the Tate elliptic curve $E_q$, now assume that the parameter $u$ from Lemma \ref{doubleAnnulus} is in the annulus
\[
A_K(q)=\mathset{u\in K^\times\mid\absolute{q}<\absolute{u}\le 1}
\]
and look at the holomorphic differential
\begin{align}\label{invariantDifferential}
\omega=\frac{dx}{2y+x}
=\frac{dX(u,q)}{2Y(u,q)+X(u,q)}
\end{align}
on $E_q$. By the Riemann-Roch Theorem \cite[Prop.\ 5.1.2(2)]{FP2004}, any holomorphic differential $1$-form has no zeros on non-singular projective curves of genus $1$.
Hence, the measure $\absolute{\omega}$ 
corresponding to $\omega$, defined as in (\ref{invariantDifferential}),
is a gauge form in the sense of \cite[Ch.\ II.2.2]{WeilAAG}. The following Lemma has an explicit description of this measure:

\begin{Lemma}\label{HaarMeasureEllipticCurve}
On the annulus $A_K(q)$, the measure $\absolute{\omega}$ coming from $\omega$, defined as in (\ref{invariantDifferential}), takes the form
\[
\absolute{\omega(u)}=\frac{\absolute{du}}{\absolute{u}}
\]
for $u\in A_K(q)$,
where it
is a positive measure  which defines an $E_q(K)$-invariant measure on the $K$-rational points $E_q(K)$ of the Tate  curve $E_q$.
\end{Lemma}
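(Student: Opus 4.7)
The plan is to compute the pullback of the invariant differential $\omega$ along the Tate uniformisation $\phi\colon K^\times \to E_q(K)$, $u\mapsto (X(u,q),Y(u,q))$, and then take absolute values. The cleanest route is to observe that on the elliptic curve $E_q$ the space of translation-invariant regular differentials is one-dimensional, and $\omega = dX/(2Y+X)$ is the standard generator in Weierstrass form. Since $\phi$ is a group homomorphism with kernel $q^{\mathds{Z}}$, the pullback $\phi^*\omega$ is a $q^{\mathds{Z}}$-invariant regular differential on $K^\times$. The meromorphic differential $du/u$ is likewise $q^{\mathds{Z}}$-invariant (as $d(qu)/(qu)=du/u$) and is the invariant differential for the multiplicative group structure on $K^\times$. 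Hence $\phi^*\omega = c\cdot du/u$ for some nonzero scalar $c\in K^\times$.

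To pin down $c$, I would expand both sides near $u=1$, which is the preimage of the identity $O\in E_q$. From Lemma \ref{doubleAnnulus} the polar parts are
\[
X(u,q) = \frac{u}{(1-u)^2} + O(q),\qquad Y(u,q)=\frac{u^2}{(1-u)^3}+O(q),
\]
so differentiating gives
\[
\frac{dX}{du} = \frac{1+u}{(1-u)^3}+O(q),\qquad 2Y+X = \frac{2u^2+u(1-u)}{(1-u)^3}+O(q)=\frac{u(1+u)}{(1-u)^3}+O(q).
\]
Taking the ratio, the leading singularities cancel and yield $\phi^*\omega = (1/u+O(q))\,du$, i.e.\ $c=1$ after matching with $du/u$ term by term. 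Therefore $\phi^*\omega = du/u$ identically as meromorphic differentials on $K^\times$, and in particular as a nonvanishing regular form on the annulus $A_K(q)$.

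Passing to absolute values, the density of the associated gauge measure is $|\omega(u)|=|du|/|u|$ on $A_K(q)$, which is strictly positive since $|u|>|q|>0$ there. Finally, for the $E_q(K)$-invariance, note that translation on $E_q(K)=K^\times/q^{\mathds{Z}}$ is induced by multiplication $u\mapsto \lambda u$ on $K^\times$. Since $\lambda\in K^\times$, the pullback of $|du|/|u|$ under this map is
\[
\frac{|d(\lambda u)|}{|\lambda u|} = \frac{|\lambda||du|}{|\lambda||u|} = \frac{|du|}{|u|},
\]
so the measure descends to a well-defined translation-invariant positive measure on $E_q(K)$, using $A_K(q)$ as a fundamental domain.

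The only genuinely delicate point is the normalisation $c=1$; everything else is formal. I expect the main obstacle to be making sure the leading-order expansions near $u=1$ are carried out with enough care (the $O(q)$ corrections are convergent on the annulus by Theorem \ref{TateUniformisation} and Lemma \ref{doubleAnnulus}, so they do not affect the local comparison). If one prefers to avoid expansions, one can alternatively invoke the functoriality of invariant differentials for the isomorphism of rigid analytic groups $K^\times/q^{\mathds{Z}}\xrightarrow{\sim}E_q$, which directly identifies $\phi^*\omega$ with $du/u$ up to a scalar determined by either side's normalisation.
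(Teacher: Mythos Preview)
Your proof is correct and reaches the same conclusion, but via a somewhat different route than the paper. The paper works purely at the level of absolute values: it writes $\omega = F(u,q)^{-1}(dX/du)\,du$ with $F = 2Y+X$, observes from Lemma~\ref{doubleAnnulus} that the $q^0$-term of $F^{-1}\,dX/du$ equals $1/u$, and then invokes the ultrametric inequality (asserting that the higher-order $q$-coefficients have absolute value at most one) to conclude $|\omega(u)| = |du|/|u|$ directly. You instead first use the group-theoretic fact that $\phi^*\omega$ must be a scalar multiple $c\cdot du/u$ of the invariant differential on $K^\times$, and then pin down $c$ from the same leading-order expansion. This is conceptually cleaner: it explains \emph{why} the answer has the shape $du/u$, rather than merely verifying its absolute value, and it makes the $q^{\mathds{Z}}$-invariance automatic rather than something to be checked.

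One small caveat: the single $q^0$-comparison you wrote down strictly yields only $c \equiv 1$ modulo the $O(q)$ remainder, hence $|c|=1$ once that remainder is bounded---which is precisely the ultrametric step the paper also takes, and which is all the lemma needs. The exact equality $c=1$ (equivalently $\phi^*\omega = du/u$ on the nose) is true, but to justify it from the expansions one must check $u\,dX/du = 2Y+X$ at every order in $q$ using the series in Lemma~\ref{doubleAnnulus}, not just at order zero; alternatively one cites the classical identity as you suggest at the end. For the statement of the lemma this refinement is immaterial. The translation-invariance argument at the end is the same as the paper's.
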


\begin{proof}
With 
\[
F(u,q)=2Y(u,q)+X(u,q)
\]
one obtains
\[
\omega=\omega(u)=F(u,q)^{-1}\frac{dX(u,q)}{du}\,du
\]
This yields the real-valued measure
\[
\absolute{\omega(u)}
=\absolute{F(u,q)^{-1}\frac{dX(u,q)}{du}}\absolute{du}
\]
Now, Lemma \ref{doubleAnnulus} implies that
\begin{align*}
\absolute{F(u,q)^{-1}\frac{dX(u,q)}{du}}
&=\absolute{\frac{(1-u)^3}{u(1+u)}\cdot\frac{1+u}{(1-u)^3}}+\text{higher order terms in $q$}
\\
&=\frac{1}{\absolute{u}}
\end{align*}
because all coefficients of $q^d$ with $d\ge1$ each have absolute value at most one, and because
$\absolute{q}<\absolute{u}\le1$. It follows that
\[
\absolute{\omega(u)}=\frac{\absolute{du}}{\absolute{u}}
\]
as asserted. Since $A_K(q)$ is a fundamental domain of the action of the discrete group $q^{\mathds{Z}}$, this proves that $\absolute{\omega(u)}$ with $u\in A_K(q)$ defines the positive measure on $E_q$ associated with the holomorphic differential form $\omega$.

\smallskip
In order to show how the measure $\absolute{\omega}$ pulls back under the action of the group $E_q(K)$, denote
with $f_P\colon E_q\to E_q$ the translation with $P\in E_q(K)$. We claim that
\[
f_P^*\absolute{\omega}=\absolute{\omega}
\]
For this, w.l.o.g.\ take a representative $v$ of $P$ in the annulus $A_K(q)$, since $A(q)$ is a fundamental domain for $E_q(K)$. Then it holds true that
\[
f_P^*\absolute{\omega(u)}
=\absolute{\omega(vu)}
=\frac{\absolute{dvu}}{\absolute{vu}}
\stackrel{(*)}{=}
\frac{\absolute{v}\absolute{du}}{\absolute{v}\absolute{u}}
=\frac{\absolute{du}}{\absolute{u}}
=\absolute{\omega(u)}
\]
as asserted, whereby $(*)$ holds true because of the transformation rule for the Haar measure $\absolute{du}$ on $K$. 
\end{proof}

\subsection{A Kernel  Function on a Tate Curve}
A theta function on $K^\times$ for $E_q(K)$ is given by
\[
\theta(z)=
\prod\limits_{n\ge0}\left(1-q^nz^{-1}\right)\prod\limits_{n>0}\left(1-q^nz\right)
\]
with $z\in K^\times$, \cite[Def.\ 5.1.8]{FP2004}.
For $x\in K^\times$, one defines
\[
\theta_x(z)=\theta(x^{-1}z)
\]
and obtains that  any invertible meromorphic function on $E_q(K)$ can be written as
\begin{align}\label{meromorphicTheta}
f(z)=\lambda\prod\limits_{i=1}^N\theta_{x_i}(z)^{n_i}
\end{align}
where $\lambda\in K^\times$, and
$x_i\in K^\times$ are the representatives of the finitely many zeros and poles of $f$ with corresponding multiplicities $n_i\in\mathds{Z}$, provided
that
\begin{align}\label{divisor}
\sum\limits_{i=1}^Nn_i=0\quad\text{and}\quad \prod\limits_{i=1}^N x^{n_i}\in q^{\mathds{Z}}
\end{align}
cf.\ \cite[p.\ 128]{FP2004}. Notice that the first condition in (\ref{divisor}) is because the degree of $\divisor(f)$ is zero according to Riemann-Roch \cite[Prop.\ 5.1.2(2)]{FP2004}. 

\begin{remark}\label{pullbackHaarMeasure}
Notice that the annulus
$A_K(q)$
is a fundamental domain for the action of $q^{\mathds{Z}}$ on $K^\times$ giving rise to the universal covering map 
\[
\rho\colon K^\times \to E_q(K)
\]
and Lemma \ref{HaarMeasureEllipticCurve} says that the measure $\absolute{\omega}$ pulls back to the Haar measure on $K^\times$, normalised such that  circles centred in zero all have volume $1-p^{-f}$, and so is itself  a Haar measure on the Tate curve $E_q(K)$.
\end{remark}

Let $x,y\in E_q(K)$, and define the function
\begin{align}\label{preKernelFunction}
g(x,y)=\frac{\theta(x^{-1}y)\theta(y^{-1}x)}{\theta(xy)^2}
=\frac{\theta_x(y)}{\theta_{x^{-1}}(y)}\cdot
\frac{\theta_y(x)}{\theta_{y^{-1}}(x)}
\end{align}
which is $q^{\mathds{Z}}$-invariant in both variables. As each factor on the right-hand side of (\ref{preKernelFunction}) satisfies condition (\ref{divisor}), it defines a meromorphic function on $E_q(K)$ for any fixed $x\in E_q(K)$, and also for any fixed $y\in E_q(K)$.
Its divisor as a function on the surface $E_q(K)^2$ is
\[
\divisor(g)=2\left[y-x\right] - 2\left[y-x^{-1}\right]\in\Div\left(E_q(K)^2\right)
\]
whose double zero is the diagonal
$V(x-y)$, and whose double pole is the curve $V(xy-1)$ in $E_1(K)^2$, where
\[
V(F)=\mathset{x\in X\mid F(x)=0}
\]
is the vanishing set of a function $F\in K(X)$ for a variety $X$ over $K$.

\begin{Lemma}\label{absRationalFunction}
It holds true that
\[
\absolute{g(x,y)}
=\frac{\absolute{\tilde{x}\tilde{y}}\absolute{\tilde{x}-\tilde{y}}^2}{\absolute{1-\tilde{x}\tilde{y}}^2}
\]
for suitable representatives $(\tilde{x},\tilde{y})\in \left(K^\times\right)^2$ of $x,y\in E_q(K)^2$. 
\end{Lemma}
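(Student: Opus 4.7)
The plan is to exploit the functional equation $\theta(z^{-1})=-z\,\theta(z)$, which follows from the product definition by peeling off the $n=0$ factor of $\theta(z^{-1})$ and cancelling the reindexed tails against those of $\theta(z)$. Applied to the second numerator factor of $g$ it gives $\theta(\tilde y^{-1}\tilde x)=-\tilde x^{-1}\tilde y\,\theta(\tilde x^{-1}\tilde y)$, so that
\[
g(\tilde x,\tilde y)\;=\;-\,\frac{\tilde y}{\tilde x}\cdot\frac{\theta(\tilde x^{-1}\tilde y)^2}{\theta(\tilde x\tilde y)^2},
\]
and the problem reduces to evaluating $|\theta|$ at the two points $\tilde x^{-1}\tilde y$ and $\tilde x\tilde y$.

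The next step is to compute $|\theta(z)|$ on the open double annulus $D=\{z\in K^\times:|q|<|z|<|q|^{-1}\}$. For $z\in D$ and $n\ge 1$ one has $|q^n z|\le|q|\,|z|<1$ and $|q^n z^{-1}|\le|q|/|z|<1$, so every factor $(1-q^n z)$ and $(1-q^n z^{-1})$ with $n\ge 1$ has absolute value $1$ by the ultrametric inequality; only the $n=0$ factor $(1-z^{-1})$ contributes, giving the uniform formula
\[
|\theta(z)|\;=\;|1-z^{-1}|\;=\;\frac{|z-1|}{|z|}\qquad(z\in D\setminus\{1\}),
\]
where the second equality is just a factoring that is valid whether $|z|<1$, $|z|=1$, or $|z|>1$.

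The last ingredient is the selection of suitable representatives: given $x,y\in E_q(K)$ I would choose $\tilde x,\tilde y\in K^\times$ so that both $\tilde x^{-1}\tilde y$ and $\tilde x\tilde y$ lie in $D$. Writing $\tilde x=q^k\tilde x_0$, $\tilde y=q^l\tilde y_0$ with the canonical lifts $\tilde x_0,\tilde y_0\in A_K(q)$, this amounts to forcing $l-k$ and $l+k$ each into an open real interval of length $2$ determined by the valuations of $\tilde x_0$ and $\tilde y_0$. Each such interval contains two consecutive integers of opposite parities, and since $l-k$ and $l+k$ share parity, one of the two parity classes always furnishes a valid pair $(k,l)$; hence such lifts always exist. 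Substituting $|\theta(z)|=|z-1|/|z|$ into the simplified expression for $|g|$ and cancelling,
\[
|g(\tilde x,\tilde y)|\;=\;\frac{|\tilde y|}{|\tilde x|}\cdot\frac{|\tilde y-\tilde x|^2/|\tilde y|^2}{|1-\tilde x\tilde y|^2/|\tilde x\tilde y|^2}\;=\;\frac{|\tilde x\tilde y|\,|\tilde x-\tilde y|^2}{|1-\tilde x\tilde y|^2},
\]
which is the claim. The main obstacle is exactly this parity bookkeeping, because the right-hand side of the identity is not invariant under $\tilde x\mapsto q\tilde x$ or $\tilde y\mapsto q\tilde y$ individually, so one really has to pin down the correct class of lifts; this is the content of the word \emph{suitable} in the statement.
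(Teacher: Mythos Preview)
Your argument follows the same core idea as the paper: use that $|\theta(z)|=|1-z^{-1}|$ on an annulus and simplify algebraically. The paper is terser---it applies this directly to all three arguments $\tilde x^{-1}\tilde y$, $\tilde y^{-1}\tilde x$, $\tilde x\tilde y$ without invoking the functional equation and without discussing lifts at all. Your preliminary use of $\theta(z^{-1})=-z\,\theta(z)$ to collapse the numerator to $\theta(\tilde x^{-1}\tilde y)^2$ is a pleasant extra step that cuts the number of $|\theta|$-evaluations from three to two, but it is not essential.

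One slip in your lift discussion: an open real interval of length $2$ does \emph{not} always contain two consecutive integers---when its endpoints are themselves integers it contains exactly one. This happens for the $(l-k)$-interval when $v(\tilde x_0)=v(\tilde y_0)$ (forcing $l-k=0$) and for the $(l+k)$-interval when $v(\tilde x_0)+v(\tilde y_0)\in\{0,v(q)\}$ (forcing $l+k\in\{0,-1\}$). In the overlap case $v(\tilde x_0)=v(\tilde y_0)=v(q)/2$ (possible when $v(q)$ is even) your parity matching is genuinely unsatisfiable, so neither $\tilde x^{-1}\tilde y$ nor $\tilde x\tilde y$ can be forced into $D$ simultaneously by this argument. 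The paper's own proof glosses over exactly the same boundary issue, so you are no worse off than the original; but the sentence ``each such interval contains two consecutive integers'' is not literally correct and would need a separate treatment of this edge case.
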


\begin{proof}
This follows from the fact that, if $z\in A_q(K)$, then
\[
\absolute{\theta(z)}
=\absolute{1-z^{-1}}
\]
which implies
\begin{align*}
\absolute{g(x,y)}
&=\frac{\absolute{1-\tilde{y}^{-1}\tilde{x}}\absolute{1-\tilde{x}^{-1}\tilde{y}}}{\absolute{1-\tilde{x}^{-1}\tilde{y}^{-1}}^2}
\\
&=\frac{\absolute{\tilde{x}-\tilde{y}}^2}{\absolute{\tilde{x}-\tilde{y}^{-1}}\absolute{\tilde{x}^{-1}-\tilde{y}}}
=\frac{\absolute{\tilde{x}\tilde{y}}\absolute{\tilde{x}-\tilde{y}}^2}{\absolute{1-\tilde{x}\tilde{y}}^2}
\end{align*}
as asserted.
\end{proof}

A neighbourhood of the poles of the map $A_K(q)^2\to\mathds{R},\;(x,y)\mapsto\absolute{g(x,y)}$ is the set
\[
\mathcal{P}_g=\mathset{(x,y)\in A_K(q)^2\mid\absolute{xy}=1\;\text{and}\;\absolute{x}\neq \absolute{y}}
\]
where $\absolute{x}\neq\absolute{y}$ is required, because if $\absolute{x}=\absolute{z}$ with
$z\in E_q(K)[2]\cap A_K(q)$, then
$x=uz$ with $\absolute{u}=1$ and
\[
\frac{\absolute{x-y}}{\absolute{1-xy}}
=\frac{\absolute{x}\absolute{1-x^{-1}y}}{\absolute{1-xy}}=\frac{\absolute{x}\absolute{1-uzy}}{\absolute{1-uzy}}
=\absolute{x}
\]
for any $y\in A_K(q)$.
This means that  pairs $(x,y)$ with $\absolute{x}=\absolute{y}$ are not poles of $\absolute{g(x,y)}$.
The $\absolute{\omega}$-measure of the set $\mathcal{P}_g$ is non-zero, because it is a union of circles. Hence, it is reasonable to define the following kernel function:
\[
H_{\theta}(x,y)=\begin{cases}
\absolute{g(x,y)},&\nexists\;\text{representatives}\;
(\tilde{x},\tilde{y})\in\mathcal{P}_g
\\
1,&\exists\;\text{representatives}\;(\tilde{x},\tilde{y})\in\mathcal{P}_g
\end{cases}
\]
for $x,y\in E_q(K)$.
In this way, obtain the linear operator
\[
\mathcal{H}_{\theta}
\psi(x)
=\int_{E_q(K)}H_{\theta}(x,y)(\psi(y)-\psi(x))\absolute{\omega(y)}
\]
for $\psi\in\mathcal{D}(E_q(K))$ and its extension to a linear operator on $C(E_q(K))$ and on $L^2(E_q(K))$. 
The significance of this kernel function is that it is controlled from the universal covering of the Tate curve via the function $\theta$. This is an example of an operator invariant under the action of the fundamental group of $E_q(K)$, and generalises the concept of radial functions which is used for constructing kernel functions for integral operators on function spaces over $K$.
\newline

The corresponding degree function is
\[
\deg_{\mathcal{H}_{\theta}}\colon E_q(K)\to\mathds{R},\;
x\mapsto
\int_{E_q(K)}H_{\theta}(x,y)\,\absolute{d\omega(y)}
\]
and is defined wherever the integral converges. The adjacency operator is defined as
\[
\mathcal{A}_{\theta}
\psi(x)=\int_{E_q(K)}
H_{\theta}(x,y)\psi(y)\absolute{\omega(y)}
\]
for $x\in E_q(K)$.
\newline

Given a partition of unity
\[
1=\sum\limits_{k=0}^{v(q)-1}\eta_\ell
\]
with
\[
\eta_\ell(x)
=\Omega(x\in S_\ell)
\in\mathcal{D}(E_q(K))
\]
where $S_\ell$ is the circle
\[
S_\ell=S_\ell(0)
=\mathset{x\in K\mid\absolute{x}=\absolute{\pi^\ell}}
\]
for $\ell\in\mathds{Z}$.
Of interest are the matrix elements
\[
A_{\sigma}\left(k,\ell\right)
=\frac{\mathcal{A}_{\theta}\,\eta_\ell(\pi^k)}{1-\absolute{\pi}}
\]
for $k,\ell\in\mathds{Z}/v(q)\mathds{Z}$.
\newline

In the following, assume that $\absolute{q}<\absolute{\pi}^2$.
Furthermore, all exponents $m$ in expressions like
\[
\absolute{w}^{m}
\]
for $w\in A_K(q)$ representing a point in $E_q(K)$
are understood modulo $v(q)$, if not stated otherwise.

\begin{Lemma}\label{matrixElements}
It holds true that
\[
A_\sigma(k,\ell)
=\begin{cases}
\absolute{\pi}^{k+\ell+2\min(k,\ell)},&k+\ell\not\equiv 0,\;k\not\equiv\ell\mod v(q)
\\
\frac{\absolute{\pi}^{4\ell}}{1-\absolute{\pi}^3},
&k+\ell\not\equiv 0,\;k\equiv\ell\mod v(q) 
\\
1,&k+\ell\equiv 0\mod v(q)
\\
\end{cases}
\]
where in  the expression $\min(k,\ell)$, it is assumed that 
$k,\ell\in\mathset{0,\dots,v(q)}$.
\end{Lemma}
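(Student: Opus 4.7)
The plan is to unfold the definition $A_\sigma(k,\ell) = (1-|\pi|)^{-1} \int_{S_\ell} H_\theta(\pi^k,y)\,|\omega(y)|$ and evaluate the integral in each of the three regimes. Parametrising $y \in S_\ell$ as $y = \pi^\ell v$ with $v \in O_K^\times$, the invariant measure $|\omega(y)| = |dy|/|y|$ simplifies to $|dv|$, so the task reduces to computing $\int_{O_K^\times} H_\theta(\pi^k,\pi^\ell v)\,|dv|$. Lemma \ref{absRationalFunction} then gives the non-pole value of the kernel as
\[
|g(\pi^k,\pi^\ell v)| \;=\; \frac{|\pi|^{k+\ell}\,|\pi^k - \pi^\ell v|^2}{|1 - \pi^{k+\ell}v|^2},
\]
which I will estimate case-by-case via the ultrametric inequality.

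In the first subcase, $k + \ell \not\equiv 0$ and $k \not\equiv \ell \mod v(q)$. Taking without loss of generality $k < \ell$, the ultrametric inequality yields $|\pi^k - \pi^\ell v| = |\pi|^k$, while $|\pi^{k+\ell}v| < 1$ (because $0 < k+\ell < v(q)$) forces $|1 - \pi^{k+\ell}v| = 1$. The kernel is therefore the constant $|\pi|^{3k+\ell} = |\pi|^{k+\ell+2\min(k,\ell)}$, and integrating over $O_K^\times$ (of measure $1-|\pi|$) produces the first formula. In the third subcase $k + \ell \equiv 0 \mod v(q)$, the product $\pi^k \cdot \pi^\ell v$ is $q^{\mathds{Z}}$-equivalent to a unit, so the pair $(\pi^k, y)$ admits representatives in the pole neighbourhood $\mathcal{P}_g$; therefore $H_\theta \equiv 1$ by definition, the integral is $1 - |\pi|$, and $A_\sigma = 1$.

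The main technical step is the second subcase, where $k = \ell$ and $2\ell \not\equiv 0 \mod v(q)$. Here $|\pi^\ell - \pi^\ell v| = |\pi|^\ell|1-v|$ and $|1-\pi^{2\ell}v|=1$, so the kernel equals $|\pi|^{4\ell}|1-v|^2$ and depends non-trivially on $v$. I would compute
\[
\int_{O_K^\times}|1-v|^2\,|dv|
\]
by partitioning $O_K^\times$ along the level sets $\{v : |1-v| = |\pi|^j\}$: the stratum with $|1-v| = 1$ has measure $1 - 2|\pi|$, while for $j \geq 1$ the stratum $1 + \pi^j O_K^\times$ has measure $|\pi|^j(1-|\pi|)$. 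Summing the resulting geometric series in $|\pi|^3$ should yield $(1-|\pi|)/(1-|\pi|^3)$, and dividing by $1-|\pi|$ then produces the expected $|\pi|^{4\ell}/(1-|\pi|^3)$. The subtle point will be reconciling the boundary case $k = \ell = v(q)/2$ (when $v(q)$ is even), where case 3 must take precedence because the corresponding pair lies in $\mathcal{P}_g$ through the $q^{\mathds{Z}}$-equivalence $xy \sim 1$.
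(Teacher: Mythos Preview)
Your approach is essentially identical to the paper's: both compute $\mathcal{A}_\theta\eta_\ell(\pi^k)$ by integrating the explicit kernel $\absolute{g(\pi^k,y)}$ over $S_\ell$, split into the same three cases, and in the diagonal case $k=\ell$ both stratify by the level sets of $|\pi^\ell-y|$ (equivalently of $|1-v|$ after your substitution $y=\pi^\ell v$). Your treatment of the off-diagonal case and the pole case matches the paper's line for line.

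There is, however, a genuine gap in the diagonal case. You correctly record that the stratum $\{v\in O_K^\times:|1-v|=1\}$ has measure $1-2|\pi|$, but you then assert that summing the series ``should yield $(1-|\pi|)/(1-|\pi|^3)$''. It does not: carrying out your own arithmetic gives
\[
\int_{O_K^\times}|1-v|^2\,|dv|
=(1-2|\pi|)+(1-|\pi|)\sum_{j\ge1}|\pi|^{3j}
=\frac{1-2|\pi|+|\pi|^4}{1-|\pi|^3},
\]
so that $A_\sigma(\ell,\ell)=|\pi|^{4\ell}\,\dfrac{1-|\pi|-|\pi|^2-|\pi|^3}{1-|\pi|^3}$, which is strictly smaller than the claimed value $|\pi|^{4\ell}/(1-|\pi|^3)$. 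A quick sanity check confirms that the stated value cannot be right: on $O_K^\times$ the integrand $|\pi|^{4\ell}|1-v|^2$ is bounded above by $|\pi|^{4\ell}$, so $A_\sigma(\ell,\ell)\le|\pi|^{4\ell}<|\pi|^{4\ell}/(1-|\pi|^3)$. The paper's own derivation contains the same slip in a less visible form---its expression $|\pi|^\ell(1-|\pi|)\sum_{\nu\ge\ell}|\pi|^{3\nu}$ tacitly assigns measure $|\pi|^\ell(1-|\pi|)$ to the stratum $\nu=\ell$, whereas the correct measure of $\{y\in S_\ell:|\pi^\ell-y|=|\pi|^\ell\}$ is $|\pi|^\ell(1-2|\pi|)$. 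So your stratification is in fact more careful than the paper's, but the series you set up does not sum to what you (and the lemma as stated) claim.
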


\begin{proof}
\emph{Case $k+\ell\equiv 0\mod v(q)$.}
This happens iff 
$\absolute{\pi^k\pi^\ell}\in\absolute{q}^{\mathds{Z}}$. In this case, 
\begin{align*}
\mathcal{A}_{\theta}\eta_\ell(\pi^k)&=
\int_{\absolute{y}=\absolute{\pi}^\ell}\absolute{\omega(y)}=1-\absolute{\pi}
\end{align*}
which proves the assertion in this case.

\smallskip\noindent
\emph{Case $k+\ell\not\equiv 0\mod v(q)$ and $k\not\equiv \ell\mod v(q)$}. 
This happens iff 
$\absolute{\pi^k\pi^\ell}\notin q^{\mathds{Z}}$ and $\absolute{\pi^k}\neq\absolute{\pi^\ell}$.
In this case, first assume that $0\le k<\ell<v(q)$. Then
\begin{align*}
\mathcal{A}_{\theta}\eta_\ell(\pi^k)&
=\absolute{\pi}^{k+\ell}\int_{\absolute{y}=\absolute{\pi}^\ell}
\absolute{\pi}^{2k}\frac{dy}{\absolute{y}}
\\
&=
\absolute{\pi}^{3k+\ell}(1-\absolute{\pi})
\end{align*}
Now, assume that $0\le\ell<k<v(q)$.
Then
\begin{align*}
\mathcal{A}_{\theta}\eta_\ell(\pi^k)&
=\absolute{\pi}^{k+\ell}
\int_{\absolute{y}=\absolute{\pi}^\ell}
\absolute{\pi}^{2\ell}
\frac{\absolute{dy}}{\absolute{y}}
\\
&=\absolute{\pi}^{k+3\ell}(1-\absolute{\pi})
\end{align*}
This proves the assertion in this case.

\smallskip\noindent
\emph{Case $k+\ell\not\equiv0\mod v(q)$ and $k\equiv\ell\mod v(q)$}. This is the case iff  $\absolute{\pi^k}=\absolute{\pi^\ell}$. Then
\begin{align*}
\mathcal{A}_{\theta}\eta_\ell(\pi^k)
&=\absolute{\pi}^{2\ell}
\int_{\absolute{y}=\absolute{\pi}^\ell}\absolute{\pi^\ell-y}^2\frac{\absolute{dy}}{\absolute{y}}
\\
&=\absolute{\pi}^\ell
(1-\absolute{\pi})
\sum\limits_{\nu=\ell}^\infty\absolute{\pi}^{3\nu}
\\
&=\frac{\absolute{\pi}^{4\ell}(1-\absolute{\pi})}{1-\absolute{\pi}^3}
\end{align*}
This proves the assertion in the remaining case.
\end{proof}

\begin{Corollary}\label{degreeFunction}
It holds true that
\[
\frac{\deg_{\mathcal{H}_{\theta}}(x)}{1-\absolute{\pi}}
=1+\absolute{\pi}^k
\sum\limits_{\ell=0\atop v(q)\nmid\ell+k}^{k-1}
\absolute{\pi}^{3\ell}
+\absolute{\pi}^{3k}
\sum\limits_{\ell=k+1\atop v(q)\nmid \ell+k}
^{v(q)-1}\absolute{\pi}^\ell
+\epsilon_q(k)\absolute{\pi}^{4k}
>0
\]
where $\absolute{x}=\absolute{\pi}^k$ and
\[
\epsilon_q(k)
=\begin{cases}
\left(1-\absolute{\pi}^3\right)^{-1}
,&v(q)\in 2\mathds{Z}\;\text{and}\;v(q)\nmid k
\\
1,&\text{otherwise}
\end{cases}
\]
with $k\in\mathset{0,\dots,v(q)-1}$.
\end{Corollary}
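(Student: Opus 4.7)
The plan is to reduce the integral defining $\deg_{\mathcal{H}_\theta}(x)$ to a finite sum of matrix entries $A_\sigma(k,\ell)$ and then invoke Lemma \ref{matrixElements} directly. First I insert the partition of unity $1=\sum_{\ell=0}^{v(q)-1}\eta_\ell$ into
\[
\deg_{\mathcal{H}_\theta}(x)=\int_{E_q(K)}H_\theta(x,y)\,\absolute{\omega(y)},
\]
which, by definition of $A_\sigma$, gives for $\absolute{x}=\absolute{\pi}^k$ the identity
\[
\frac{\deg_{\mathcal{H}_\theta}(\pi^k)}{1-\absolute{\pi}}=\sum_{\ell=0}^{v(q)-1}A_\sigma(k,\ell).
\]
Everything then hinges on summing the right-hand side case by case.

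Next I partition the index set $\{0,\dots,v(q)-1\}$ according to the three branches of Lemma \ref{matrixElements}. For each fixed $k$ there is a unique $\ell$ with $k+\ell\equiv 0\pmod{v(q)}$, namely $\ell\equiv -k$; this falls into the third branch and contributes exactly $1$, producing the leading summand. Among the remaining indices with $k+\ell\not\equiv 0$ and $\ell\neq k$, those with $\ell<k$ have $\min(k,\ell)=\ell$ and contribute $\absolute{\pi}^{k+3\ell}$, which collects into $\absolute{\pi}^k\sum_{\ell=0,\,v(q)\nmid\ell+k}^{k-1}\absolute{\pi}^{3\ell}$, while those with $\ell>k$ have $\min(k,\ell)=k$ and contribute $\absolute{\pi}^{3k+\ell}$, collecting into $\absolute{\pi}^{3k}\sum_{\ell=k+1,\,v(q)\nmid\ell+k}^{v(q)-1}\absolute{\pi}^\ell$; in both sums the exclusion $v(q)\nmid\ell+k$ merely removes the unique case-1 index already counted as $1$. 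The diagonal index $\ell=k$ lies in the second branch of Lemma \ref{matrixElements} precisely when $2k\not\equiv 0\pmod{v(q)}$, in which case it contributes $\absolute{\pi}^{4k}/(1-\absolute{\pi}^3)$; this is packaged into the term $\epsilon_q(k)\absolute{\pi}^{4k}$, whose prefactor encodes whether the diagonal has already been absorbed into the leading $1$.

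Positivity is then automatic: the leading summand is $1$ and every other contribution is a positive rational function of $\absolute{\pi}\in(0,1)$. The only real obstacle is the combinatorial bookkeeping, namely tracking when the diagonal index $\ell=k$ coincides with the unique index satisfying $k+\ell\equiv 0\pmod{v(q)}$. This collision happens exactly when $2k\equiv 0\pmod{v(q)}$, hence always at $k=0$ and additionally at $k=v(q)/2$ when $v(q)$ is even; the case analysis built into $\epsilon_q(k)$ is tailored precisely to this parity dichotomy, and once it is sorted out the asserted identity is obtained by reassembling the sums.
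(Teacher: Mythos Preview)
Your argument is correct and is exactly the approach the paper takes: its own proof is the single sentence ``The expression is immediately obtained from Lemma~\ref{matrixElements},'' and what you have written is precisely the detailed unpacking of that sentence via the partition of unity $1=\sum_\ell\eta_\ell$ and the three-branch case split on $A_\sigma(k,\ell)$. The only point you leave implicit is that $\deg_{\mathcal{H}_\theta}(x)$ depends only on $\absolute{x}$ (so that computing at $x=\pi^k$ suffices), but this is immediate from the shape of $H_\theta$ and the invariance of $\absolute{\omega}$, and the paper does not spell it out either.
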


\begin{proof}
The expression is  immediately obtained from Lemma \ref{matrixElements}.
\end{proof}

\begin{Lemma}
The following statements hold true:
\begin{enumerate}
\item $H_{\theta}\in\mathcal{D}(E_q(K)^2)$.
\item The degree function $\deg_{\mathcal{H}_{\theta}}$ is defined everywhere on $E_q(K)$.
\item If $\sigma(x)=\sigma(x')$, then $\deg_{\mathcal{H}_{\theta}}(x)=\deg_{\mathcal{H}_{\theta}}(x')$. In particular, the degree function is locally constant on $E_q(K)$.
\end{enumerate}
\end{Lemma}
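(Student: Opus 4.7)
My approach is to dispatch the three claims sequentially, relying on Lemma~\ref{absRationalFunction} and the explicit formula of Corollary~\ref{degreeFunction}.

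For (1), compact support is automatic since $E_q(K)^2$ is compact, being the square of the compact $p$-adic analytic manifold $E_q(K)$. For local constancy, I would use Lemma~\ref{absRationalFunction} to rewrite $H_\theta$ off $\mathcal{P}_g$ as $\absolute{\tilde{x}\tilde{y}}\absolute{\tilde{x}-\tilde{y}}^2/\absolute{1-\tilde{x}\tilde{y}}^2$ with representatives in $A_K(q)$. Each factor is the absolute value of an analytic function, and is therefore locally constant on its non-vanishing locus. Partitioning $A_K(q)^2$ according to the finitely many possible values of $\absolute{\tilde{x}}$, $\absolute{\tilde{y}}$, $\absolute{\tilde{x}\tilde{y}}$ and $\absolute{1-\tilde{x}\tilde{y}}$, together with the discrete level sets of $\absolute{\tilde{x}-\tilde{y}}$, yields clopen pieces on which $H_\theta$ takes a single value, while on $\mathcal{P}_g$ the value is $1$ by construction.

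For (2), I would invoke Corollary~\ref{degreeFunction}: every $x\in E_q(K)$ admits a representative $\tilde{x}\in A_K(q)$ with $\absolute{\tilde{x}}=\absolute{\pi}^k$ for some $k\in\{0,\dots,v(q)-1\}$, and the closed-form expression given there is a finite, strictly positive sum. The regularisation $H_\theta\equiv1$ on $\mathcal{P}_g$ plays a crucial role here: the bare $\absolute{g(x,\cdot)}$ is not $\absolute{\omega}$-integrable near the pole at $y=x^{-1}$, but after regularisation $H_\theta$ is bounded on the compact space $E_q(K)^2$, which secures convergence of the defining integral at every $x\in E_q(K)$.

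For (3), the formula in Corollary~\ref{degreeFunction} depends only on the integer $k$ with $\absolute{\tilde{x}}=\absolute{\pi}^k$. The retraction $\sigma\colon E_q^{an}\to I(E_q^{an})$ onto the circle skeleton is, in the Tate case, governed by $-\log_{p^f}\absolute{\tilde{x}}$ of a representative in $A_K(q)$, so $\sigma(x)=\sigma(x')$ forces $k=k'$ and hence $\deg_{\mathcal{H}_\theta}(x)=\deg_{\mathcal{H}_\theta}(x')$. Because the circles $S_k\cap E_q(K)$ are clopen (as the absolute value is locally constant on $K^\times$), the degree function is constant on each of the finitely many circles and hence locally constant on $E_q(K)$.

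The step I expect to be most delicate is (1), specifically reconciling the vanishing of $H_\theta$ on the diagonal with membership in $\mathcal{D}(E_q(K)^2)$: since $\absolute{\tilde{x}-\tilde{y}}$ assumes arbitrarily small nonzero values in every neighbourhood of a diagonal point, a careful articulation of the clopen partition, rather than an appeal to the manifold topology alone, will be required to verify local constancy in the Bruhat--Schwartz sense.
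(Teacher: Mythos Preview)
Your plan follows the paper's proof line for line: the paper argues (1) from Lemma~\ref{absRationalFunction}, derives (2) from (1), and reads (3) off Corollary~\ref{degreeFunction}, exactly as you propose---only with far less detail than you supply.

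The concern you flag in your final paragraph is not merely delicate; it is a genuine obstruction, and no ``careful articulation of the clopen partition'' will dissolve it. Take a diagonal point $(x,x)$ with representative $\tilde{x}\in A_K(q)$ satisfying $\absolute{\tilde{x}}<1$. For $\tilde{y}$ in the same circle $S_k$ one has $\absolute{\tilde{x}\tilde{y}}=\absolute{\tilde{x}}^2<1$, hence $\absolute{1-\tilde{x}\tilde{y}}=1$, and Lemma~\ref{absRationalFunction} gives
\[
H_\theta(x,y)=\absolute{\tilde{x}}^2\,\absolute{\tilde{x}-\tilde{y}}^2 .
\]
In every neighbourhood of $(x,x)$ this expression takes the value $0$ at the diagonal and the infinitely many distinct positive values $\absolute{\tilde{x}}^2\absolute{\pi}^{2n}$ for all sufficiently large $n$. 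Thus $H_\theta$ is \emph{not} locally constant at diagonal points, and strictly speaking $H_\theta\notin\mathcal{D}(E_q(K)^2)$; the paper's one-line appeal to Lemma~\ref{absRationalFunction} glosses over this. What the rest of the argument actually needs---and what Lemma~\ref{absRationalFunction} together with the cutoff on $\mathcal{P}_g$ does deliver---is that $H_\theta$ is \emph{continuous and bounded} on the compact space $E_q(K)^2$. Your remark in (2) that boundedness on a compact space secures convergence of the degree integral is precisely the correct substitute, and with that reading parts (2) and (3) of your plan go through unchanged and agree with the paper.
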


\begin{proof}
1. Lemma \ref{absRationalFunction} shows that $H_{\theta}$ is locally constant on the compact space $E_q(K)^2$, i.e.\ $H_{\theta}\in\mathcal{D}(E_q(K)^2)$.

\smallskip\noindent
2. This follows from 1.

\smallskip\noindent
3. This is an immediate consequence of Corollary \ref{degreeFunction}.
\end{proof}

\subsection{Spectrum of the Diffusion Operator on $E_q(K)$}

The pairing $\langle\cdot,\cdot\rangle_{\omega}$ on $L^2(E_q(K),\absolute{\omega})$ is defined as
\[
\langle\phi,\psi\rangle_{\omega}
=\int_{E_q(K)}\phi(x)\overline{\psi(x)}\absolute{\omega(x)}
\]

\begin{Lemma}
The operator $\mathcal{H}_{\theta}$ has the following properties:
\begin{enumerate}
\item $\mathcal{H}_{\theta}$ is a bounded linear operator on $C(E_q(K))$ w.r.t.\ $\norm{\cdot}_\infty$.
\item $\mathcal{H}_{\theta}$ is a self-adjoint bounded linear operator on $L^2(E_q(K),\absolute{\omega})$.
\end{enumerate}
\end{Lemma}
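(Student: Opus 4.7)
My plan is to reduce both statements to two structural facts already established in the preceding lemma: $H_\theta\in\mathcal{D}(E_q(K)^2)$ is bounded (say $\|H_\theta\|_\infty\le M$) and uniformly locally constant, and the degree function $\deg_{\mathcal{H}_\theta}$ is locally constant on the compact space $E_q(K)$, hence bounded. Throughout I would write $\mathcal{H}_\theta = \mathcal{A}_\theta - D$, where $D$ denotes multiplication by $\deg_{\mathcal{H}_\theta}$; this decomposition follows at once from $\int H_\theta(x,y)\,|\omega(y)| = \deg_{\mathcal{H}_\theta}(x)$.

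For (1), I would split the integrand of $\mathcal{H}_\theta\psi(x)$ into its two pieces to obtain the pointwise estimate
\[
|\mathcal{H}_\theta\psi(x)| \le 2\|\psi\|_\infty\,\|\deg_{\mathcal{H}_\theta}\|_\infty ,
\]
which delivers the operator norm bound on the uniform norm. For the range to actually sit inside $C(E_q(K))$, I would use that $H_\theta$, being a test function on the compact $E_q(K)^2$, is constant on the cells of a finite clopen partition; this forces both $\mathcal{A}_\theta\psi$ and $D\psi$ to be locally constant (in particular continuous) whenever $\psi$ is continuous.

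For (2), boundedness on $L^2$ is routine with the same decomposition in hand: $D$ is bounded because $\deg_{\mathcal{H}_\theta}$ is bounded, and for $\mathcal{A}_\theta$ the Cauchy--Schwarz inequality together with $|H_\theta|\le M$ and the finite total mass $\mu(E_q(K)) = v(q)(1-|\pi|)$ from Remark \ref{pullbackHaarMeasure} gives $\|\mathcal{A}_\theta\psi\|_2 \le M\,\mu(E_q(K))^{1/2}\|\psi\|_2$ (alternatively, a direct Schur test works). The substantive point is self-adjointness, which reduces to the symmetry $H_\theta(x,y) = H_\theta(y,x)$. This is manifest from
\[
g(x,y) = \frac{\theta(x^{-1}y)\,\theta(y^{-1}x)}{\theta(xy)^2}
\]
being invariant under $(x,y)\leftrightarrow(y,x)$, and from the defining condition of the exceptional set $\mathcal{P}_g$ being symmetric as well. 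Since $H_\theta$ is in addition real-valued, applying Fubini (justified by boundedness of the kernel and finiteness of the measure) to
\[
\langle\mathcal{H}_\theta\phi,\psi\rangle_\omega = \iint \bigl(\phi(y)-\phi(x)\bigr)H_\theta(x,y)\overline{\psi(x)}\,|\omega(y)|\,|\omega(x)|
\]
and relabeling $(x,y)\leftrightarrow(y,x)$ in the first summand will yield $\langle\phi,\mathcal{H}_\theta\psi\rangle_\omega$.

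The only step I expect to be mildly delicate, rather than hard, is ensuring that $\mathcal{H}_\theta$ actually lands in $C(E_q(K))$ and not merely in bounded functions; but the uniform local constancy of the kernel makes this essentially automatic, so I do not foresee any real obstacle.
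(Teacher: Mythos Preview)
Your proposal is correct; both parts go through as you outline them (modulo a harmless slip in the constant for the $L^2$ bound on $\mathcal{A}_\theta$: the Cauchy--Schwarz estimate you indicate actually gives $\|\mathcal{A}_\theta\psi\|_2 \le M\,\mu(E_q(K))\,\|\psi\|_2$, not $M\,\mu(E_q(K))^{1/2}\|\psi\|_2$, but this is irrelevant for boundedness).

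The paper's argument for (1) is identical in spirit to yours, just terser. For (2) the paper takes a slightly different route: rather than proving boundedness directly, it observes that $\mathcal{H}_\theta$ is symmetric (from the manifest symmetry of $H_\theta$) and everywhere defined on $L^2$, and then invokes the Hellinger--Toeplitz Theorem to conclude boundedness; self-adjointness of a bounded symmetric operator is then automatic. Your approach trades the appeal to Hellinger--Toeplitz for an explicit kernel estimate and a Fubini computation. The paper's version is shorter and avoids any computation, while yours is more self-contained and would survive in settings where the operator is not a priori everywhere defined.
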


\begin{proof}
1. This is clear, since $H_{\theta}\in\mathcal{D}(E_q(K)^2)$ and $E_q(K)$ is a compact space.

\smallskip\noindent
2. Since the kernel function $H_{\theta}$ is symmetric, it follows that $\mathcal{H}_{\theta}$ is a symmetric operator on $L^2(E_q(K),\absolute{\omega})$ which is also everywhere defined. By the Hellinger-Toeplitz Theorem \cite[Thm.\ 2.10]{Teschl2010}, it follows that $\mathcal{H}_{\theta}$ is also bounded on $L^2(E_q(K),\absolute{\omega})$.
\end{proof}

The following matrix is helpful:
\begin{align}\label{helpfulMatrix}
L=(L_{k,\ell})\in\mathds{R}^{v(q)\times v(q)}
\end{align}
with
\[
L_{k,\ell}=\mathcal{H}_{\theta}\,\eta_\ell(\pi^k)
\]
\begin{Lemma}
The matrix $L$ is diagonalisable. Its non-zero eigenvalues are all negative real numbers. The eigenvectors corresponding to eigenvalue zero are the constant vectors. 
\end{Lemma}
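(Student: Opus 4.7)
The plan is to exhibit $-L$ as the Laplacian of a weighted complete graph on $v(q)$ vertices and then invoke standard spectral theory. This reduces the lemma to verifying three properties of $L$: symmetry, vanishing row sums, and strict positivity of the off-diagonal entries.

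Unpacking the definition of $\mathcal{H}_\theta$ together with $\eta_\ell(\pi^k) = \delta_{k,\ell}$ gives
\[
L_{k,\ell} = (1-\absolute{\pi})\,A_\sigma(k,\ell)\quad\text{for }k\neq\ell,\qquad L_{k,k} = (1-\absolute{\pi})\,A_\sigma(k,k) - \deg_{\mathcal{H}_\theta}(\pi^k).
\]
Symmetry $L_{k,\ell}=L_{\ell,k}$ for $k\neq\ell$ is then immediate from the explicit formulas of Lemma \ref{matrixElements}, which are symmetric in $k$ and $\ell$ in each of the three cases; alternatively, it follows from self-adjointness of $\mathcal{H}_\theta$ combined with the fact that $\mathcal{H}_\theta\eta_\ell$ is constant on each circle $S_k$ and each $S_k$ carries the same $\absolute{\omega}$-mass $1-\absolute{\pi}$. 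Strict positivity of the off-diagonal entries is also visible directly from Lemma \ref{matrixElements}. The vanishing of the row sums follows from the identity $\sum_\ell\eta_\ell\equiv 1$ together with the fact that $\mathcal{H}_\theta$ annihilates constants by construction:
\[
\sum_{\ell=0}^{v(q)-1}L_{k,\ell} = \mathcal{H}_\theta\Bigl(\sum_\ell\eta_\ell\Bigr)(\pi^k)=\mathcal{H}_\theta(1)(\pi^k)=0.
\]

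Combining the three observations, $-L$ is a real symmetric matrix with non-positive off-diagonal entries and vanishing row sums, i.e.\ the graph Laplacian of the weighted complete graph on $\{0,\dots,v(q)-1\}$ with strictly positive edge weights $L_{k,\ell}$. Diagonalisability over $\mathds{R}$ is then immediate from the spectral theorem. The Rayleigh quotient identity
\[
\langle -Lv,v\rangle=\tfrac12\sum_{k\neq\ell}L_{k,\ell}(v_k-v_\ell)^2\ge 0
\]
shows that $-L$ is positive semidefinite, so every eigenvalue of $L$ is a non-positive real number. Equality holds in the quadratic form precisely when $v_k=v_\ell$ for every pair with $L_{k,\ell}>0$, which---because the graph is complete---forces $v$ to be constant. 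Hence the kernel of $L$ is spanned by the constant vector and all remaining eigenvalues are strictly negative.

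I do not expect a serious obstacle: the three ingredients all follow from results already stated in the excerpt (the definition of $\mathcal{H}_\theta$ and Lemma \ref{matrixElements}). The only moderate subtlety is ensuring that the kernel is exactly one-dimensional; this is secured by the completeness of the underlying graph, itself a direct consequence of the strict positivity of $A_\sigma(k,\ell)$ in Lemma \ref{matrixElements}.
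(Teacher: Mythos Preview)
Your proposal is correct and follows essentially the same route as the paper: both arguments recognise that $L$ (up to sign) is the Laplacian of a positively weighted complete graph on $v(q)$ vertices, with the off-diagonal positivity supplied by Lemma~\ref{matrixElements}, and then invoke the standard spectral facts about such Laplacians. Your write-up is more explicit (the row-sum computation via $\mathcal{H}_\theta(1)=0$ and the Rayleigh-quotient argument), whereas the paper simply appeals to the connected-graph-Laplacian fact directly, but the underlying idea is identical.
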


\begin{proof}
All matrix elements 
\[
\mathcal{A}_{\theta}\eta_\ell(\pi^k)=(1-\absolute{\pi})A_\sigma(k,\ell)
\]
are all positive, cf.\ Lemma \ref{matrixElements}, and the degree function $\deg_{\mathcal{H}_{\theta}}$ is strictly positive, cf.\ Corollary \ref{degreeFunction}.
Also, the matrix $L$ is symmetric.
It follows that $L$ is the Laplacian of a positively weighted,  undirected complete graph $G$
with $v(q)$ nodes. Hence, $L$ is diagonalisable, and all eigenvalues are real, non-positive.
Since the graph $G$ is connected, it follows that the only eigenvectors corresponding to eigenvalue zero are the constant vectors.
\end{proof}

In \cite{Kozyrev2002}, S.V.\ Kozyrev introduced $p$-adic wavelets and found they provide for an orthogonal decomposition of $L^2(\mathds{Q}_p)$
into eigenspaces of the Vladimirov operator, meaning that its spectrum consists of eigenvalues corresponding to those wavelets, and that they are mutually orthogonal \cite[Thm.\ 2]{Kozyrev2002}. They are defined as
\[
\psi_{\gamma j n}(x)=p^{-\frac{\gamma}{2}}\chi(p^{\gamma-1}jx)\Omega(\absolute{p^\gamma x-n}_p)
\]
with $\gamma\in\mathds{Z}$, $n\in\mathds{Q}_p/\mathds{Z}_p$, and $j=1,\dots,p-1$, and where $\chi\colon K\to S^1$ is an additive  character into the complex unit circle.
For a local field $K$, this extends readily to functions
\[
\psi_{B,j}(x)=\mu_K(B)^{\frac12}
\chi_K\left(\pi^{d-1}\tau(j)\right)\Omega(x\in B)
\]
where $\tau\colon O_K/\mathfrak{m}_K\to O_K$ is a lift of the residue field of $K$, $j\in \left(O_K/\mathfrak{m}_K\right)^\times$, and $B\subset K$ a disc of radius $\absolute{\pi}^{-d}$. These are the \emph{Kozyrev wavelets} for $K$. It is well-known that the result \cite[Thm.\ 2]{Kozyrev2002} readily extends to the local field $K$. However, not all of this theory is needed, here. What is needed, is the following result:
\begin{Lemma}[Kozyrev]\label{KozyrevIntegral}
It holds true that
\[
\int_{W}\psi_{B,j}(y)\,\absolute{dy}=0
\]
for all measurable subsets $W\subseteq K$ containing $B$, and $j\in O_K/\mathfrak{m}_K$.
\end{Lemma}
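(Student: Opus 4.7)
The key observation is that the indicator factor $\Omega(y\in B)$ makes $\psi_{B,j}$ vanish outside $B$; therefore, for any measurable $W$ with $B\subseteq W$,
\[
\int_W\psi_{B,j}(y)\,\absolute{dy}=\int_B\psi_{B,j}(y)\,\absolute{dy},
\]
so the claim reduces to the vanishing of the single integral $\int_B\psi_{B,j}(y)\,\absolute{dy}$.

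On $B$ the wavelet is, up to the normalisation constant $\mu_K(B)^{1/2}$, the restriction of the additive character $y\mapsto\chi_K(\pi^{d-1}\tau(j)y)$. Writing the disc as $B=a+\pi^{-d}O_K$ and performing the affine substitution $y=a+\pi^{-d}w$ with $w\in O_K$, together with the scaling $\absolute{dy}=\absolute{\pi}^{-d}\absolute{dw}$, reduces the problem to evaluating
\[
\int_{O_K}\chi_K\bigl(\pi^{-1}\tau(j)w\bigr)\,\absolute{dw},
\]
up to a root-of-unity factor $\chi_K(\pi^{d-1}\tau(j)a)$ and the normalisation constants, none of which affect vanishing.

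Because $j$ represents a nonzero class in the residue field, $\tau(j)$ is a unit of $O_K$, so the frequency $\pi^{-1}\tau(j)$ has valuation $-1$ and lies outside the conductor of the standard additive character $\chi_K$ (which is calibrated to be trivial on $O_K$ and nontrivial on $\pi^{-1}O_K$). The map $w\mapsto\chi_K(\pi^{-1}\tau(j)w)$ is therefore a nontrivial continuous character of the compact abelian group $O_K$, and orthogonality of characters on a compact abelian group forces its integral to vanish. The only delicate point is to fix the conductor convention for $\chi_K$ so that it matches the $\pi^{d-1}$ twist appearing in the definition of the wavelet; granted this standard calibration, the statement is precisely the classical orthogonality identity for additive characters of $O_K$, and no further input is needed.
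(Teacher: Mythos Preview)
Your argument is correct and complete: reducing to $\int_B$, then to a nontrivial additive character on $O_K$ via the affine change of variable, and invoking orthogonality is exactly the standard mechanism behind this vanishing. The paper does not actually prove the lemma; it simply cites \cite[Thm.\ 3.2.9]{XKZ2018}, so your write-up is more self-contained than what appears there and supplies precisely the computation that the reference would carry out.

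One small remark: the lemma as stated in the paper allows $j\in O_K/\mathfrak{m}_K$, but your proof (correctly) uses that $\tau(j)$ is a unit, i.e.\ $j\neq 0$. This is not a defect of your argument---the wavelets are defined in the paper only for $j\in(O_K/\mathfrak{m}_K)^\times$, and the claim is plainly false for $j=0$---so the restriction you impose is the intended one.
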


\begin{proof}
\cite[Thm.\ 3.2.9]{XKZ2018}.
\end{proof}

\begin{definition}
A \emph{wavelet} on $E_q(K)$ is a function $\psi\colon E_q(K)\to\mathds{C}$ supported inside an open $U\subset E_q(K)$ such that
there is a local chart $\kappa\colon U\to K$ with $\kappa_*\psi$ a Kozyrev wavelet with support inside  $U$.
\end{definition}

It is immediately seen that a wavelet on $E_q(K)$ is defined by a Kozyrev wavelet $\psi_{B,j}$ supported inside $A_K(q)$. For simplicity, a wavelet on $E_q(K)$ will be written as such a Kozyrev wavelet $\psi_{B,j}$.

\begin{Lemma}
Let $\psi_{B,j}$ be a wavelet on $E_q(K)$. Then
\[
\int_W\psi_{B,j}\absolute{\omega}=0
\]
for all measurable subsets $W\subseteq E_q(K)$ containing the ball $B$, and $j\in O_K/\mathfrak{m}_K$.
\end{Lemma}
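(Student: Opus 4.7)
The plan is to reduce the claim to the $K$-analogue already recorded in Lemma \ref{KozyrevIntegral} by transferring the integral to the fundamental domain $A_K(q)\subseteq K^\times$ and exchanging the measure $\absolute{\omega}$ for the Haar measure $\absolute{du}$ up to a constant factor. By the convention adopted just before the statement, $\psi_{B,j}$ is literally a Kozyrev wavelet on $K$ whose support is a ball $B$ contained inside the annulus $A_K(q)$, viewed as a subset of $E_q(K)$ through the quotient map $K^\times\to E_q(K)$.

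First I would observe that, since $\psi_{B,j}$ vanishes outside $B$ and $W\supseteq B$, we have
\[
\int_W \psi_{B,j}\,\absolute{\omega} = \int_B \psi_{B,j}\,\absolute{\omega},
\]
so the entire integral already takes place inside the fundamental domain. By Lemma \ref{HaarMeasureEllipticCurve}, on $A_K(q)$ the measure is given by $\absolute{\omega(u)}=\absolute{du}/\absolute{u}$. Since $B$ is a non-archimedean ball lying inside the annulus $A_K(q)$, it does not contain the point $0$, and the ultrametric inequality forces $\absolute{u}$ to be constant on $B$, equal to some value $c_B>0$. Hence on the support of $\psi_{B,j}$ the measure $\absolute{\omega}$ and the Haar measure differ by the multiplicative constant $c_B^{-1}$, giving
\[
\int_B \psi_{B,j}(u)\,\absolute{\omega(u)} \;=\; c_B^{-1}\int_B \psi_{B,j}(u)\,\absolute{du}.
\]

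Finally I would apply Lemma \ref{KozyrevIntegral} with the role of $W$ there played by $B$ itself, which trivially contains $B$. That lemma delivers $\int_B \psi_{B,j}(u)\,\absolute{du}=0$, and chaining the three equalities gives the desired vanishing $\int_W \psi_{B,j}\,\absolute{\omega}=0$. I do not see any substantive obstacle in this argument; it is essentially a bookkeeping transfer once one recognises that the Jacobian of $\absolute{\omega}$ relative to the Haar measure is constant on the support of the wavelet, and that this constancy is guaranteed by the placement of $B$ away from $0$ inside the fundamental annulus.
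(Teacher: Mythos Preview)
Your argument is correct and follows essentially the same route as the paper: transport to the fundamental domain $A_K(q)$, replace $\absolute{\omega}$ by $\absolute{du}/\absolute{u}$ via Lemma~\ref{HaarMeasureEllipticCurve}, use that $\absolute{u}$ is constant on the support $B$ (the paper phrases this as $B$ lying in a single circle $S_k(x)$), and then invoke Lemma~\ref{KozyrevIntegral}. Your step of first restricting the integral from $W$ to $B$ before pulling out the constant $\absolute{u}^{-1}$ is slightly cleaner than the paper's formulation, but the content is identical.
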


\begin{proof}
View $\psi_{B,j}$ as a Kozyrev wavelet supported in $A_K(q)$.
Since any ball in $A_K(q)$ is contained in some sphere
\[
S_k(x)=\mathset{z\in K\mid\absolute{z-x}=\absolute{\pi}^k}\subset A_K(q)
\]
with $k\in\mathset{0,\dots,v(q)-1}$, obtain 
\[
\int_{W}\psi_{B,j}\absolute{\omega}=\int_{W'}\psi_{B,j}(y)\frac{\absolute{dy}}{\absolute{y}}
=\absolute{x}^{-1}\int_{W'}\psi_{B,j}(y)\absolute{dy}=0
\]
where $W'\subset A_K(q)$ is a measurable set representing $W$, and the last equality follows from Lemma \ref{KozyrevIntegral}.
\end{proof}

\begin{thm}\label{SpectrumHeatOperator}
The space $L^2(E_q(K),\absolute{\omega})$ has an orthogonal decomposition
\[
L^2(E_q(K),\absolute{\omega})=L^2(E_q(K))_\sigma\oplus L^2(E_q(K))_0 
\]
into $\mathcal{H}_\theta$-invariant subspaces. The subspace
$L^2(E_q(K))_\sigma$ is of finite dimension $v(q)$ and spanned by the indicator functions $\eta_\ell$, with $\ell=0,\dots,v(q)-1$, and the subspace $L^2(E_q(K))_0$ is
spanned by the wavelets on $E_q(K)$.
The spectrum
of $\mathcal{H}_{\theta}$ as a linear operator on $L^2(E_q(K),\absolute{\omega})$ consists entirely of eigenvalues. These are the eigenvalues of the helpful matrix $L$ of (\ref{helpfulMatrix}) acting on $L^2(E_q(K))_\sigma$, 
and the negative degree values $-\deg_{\mathcal{H}_{\theta}}(x)$
corresponding to normalised wavelets on $E_q(K)$ supported in $S_k(x)$ with $x\in  A_K(q)$, and $k=0,\dots,v(q)-1$.
\end{thm}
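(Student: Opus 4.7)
The plan is to establish the orthogonal decomposition, check that $\mathcal{H}_\theta$ preserves both summands, identify its action on each, and then read off the spectrum using self-adjointness together with completeness of the exhibited eigenbasis.

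For the decomposition, first I observe that the indicators $\eta_0,\dots,\eta_{v(q)-1}$ are pairwise orthogonal in $L^2(E_q(K),\absolute{\omega})$ because the circles $S_\ell$ partition the fundamental domain $A_K(q)$. A wavelet $\psi_{B,j}$ with support $B\subset S_k\subset A_K(q)$ is orthogonal to each $\eta_\ell$, trivially when $\ell\neq k$ by disjoint support, and when $\ell=k$ via
\[
\langle\psi_{B,j},\eta_k\rangle_\omega=\absolute{\pi}^{-k}\int_B\psi_{B,j}(y)\,\absolute{dy}=0,
\]
using that $\absolute{y}=\absolute{\pi}^k$ is constant on $B$ together with the preceding lemma. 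Orthogonality between distinct wavelets reduces by the same constancy argument to the classical Kozyrev orthogonality in $L^2(K,\absolute{dx})$. Completeness is obtained from density of $\mathcal{D}(E_q(K))$ in $L^2(E_q(K),\absolute{\omega})$: any indicator of a ball inside $A_K(q)$ is a linear combination of a single $\eta_\ell$ together with finitely many Kozyrev wavelets by iterated Haar-wavelet decomposition.

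Next, invariance of $L^2(E_q(K))_\sigma$ follows from Lemma~\ref{absRationalFunction}: since $H_\theta(x,y)$ depends only on the absolute values of representatives in $A_K(q)$, the value $\mathcal{H}_\theta\eta_\ell(x)$ depends only on the circle containing $x$, giving $\mathcal{H}_\theta\eta_\ell=\sum_k L_{k,\ell}\eta_k$ with $L$ the helpful matrix of~(\ref{helpfulMatrix}). For the complementary subspace I claim that every wavelet $\psi=\psi_{B,j}$ supported in a ball $B\subset S_k(x_0)\subset A_K(q)$ is an eigenfunction of $\mathcal{H}_\theta$ with eigenvalue $-\deg_{\mathcal{H}_\theta}(x_0)$. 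Writing $\mathcal{H}_\theta\psi(z)=\mathcal{A}_\theta\psi(z)-\deg_{\mathcal{H}_\theta}(z)\psi(z)$ and using that the degree function is locally constant with value depending only on $\absolute{z}$ (Corollary~\ref{degreeFunction}), this reduces to verifying $\mathcal{A}_\theta\psi(z)=0$ for every $z\in E_q(K)$.

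The verification splits into two cases. When $z\notin B$, the ultrametric triangle inequality together with $\absolute{q}<\absolute{\pi}^2$ force the four absolute values $\absolute{y}$, $\absolute{z-y}$, $\absolute{zy}$ and $\absolute{1-zy}$ to be constant for $y\in B$; hence $H_\theta(z,\cdot)$ is constant on $B$ and $\mathcal{A}_\theta\psi(z)=H_\theta(z,y_0)\int_B\psi\,\absolute{\omega}=0$ by the Kozyrev integral identity. When $z\in B$, the same ultrametric considerations give $\absolute{y}=\absolute{z}=\absolute{\pi}^k$ and $\absolute{1-zy}=1$ on $B$, so Lemma~\ref{absRationalFunction} reduces the kernel to the purely radial form $H_\theta(z,y)=\absolute{\pi}^{2k}\absolute{z-y}^2$. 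Partitioning $B$ into the sub-balls on which the Kozyrev wavelet is constant and invoking the standard character cancellation on the residue field $O_K/\mathfrak{m}_K$---exactly as in the classical proof that Kozyrev wavelets diagonalise the Vladimirov-Taibleson operator---yields $\mathcal{A}_\theta\psi(z)=0$. This $z\in B$ case is the main technical obstacle; the remaining steps are essentially bookkeeping on top of Section~2. With both summands $\mathcal{H}_\theta$-invariant and a complete orthogonal family of eigenvectors exhibited, the bounded self-adjoint operator $\mathcal{H}_\theta$ has spectrum consisting entirely of the displayed eigenvalues: those of $L$ on the $v(q)$-dimensional summand, together with the $v(q)$ values $-\deg_{\mathcal{H}_\theta}(x)$ indexed by the circles of $A_K(q)$ on the infinite-dimensional summand.
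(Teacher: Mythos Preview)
Your overall strategy matches the paper's: orthogonality of the $\eta_\ell$ and the wavelets, completeness (the paper quotes \cite[Prop.~2]{ZunigaEigen} for the ball decomposition whereas you appeal directly to the Haar--wavelet expansion of indicators), and identification of the action on $L^2(E_q(K))_\sigma$ via the matrix $L$ from~(\ref{helpfulMatrix}). In that sense the two proofs run parallel, and your explicit verification that $L^2(E_q(K))_\sigma$ is $\mathcal{H}_\theta$-invariant is a useful addition the paper leaves implicit.

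There is, however, a genuine error in your $z\in B$ step. The ``standard character cancellation'' you invoke shows that $\psi_{B,j}$ is an \emph{eigenfunction} of a radial adjacency operator, not that the adjacency integral vanishes. Concretely, take $B\subset S_k$ of radius $r$ with $k\ge 1$ so that indeed $\absolute{1-zy}=1$; then $H_\theta(z,y)=\absolute{\pi}^{2k}\absolute{z-y}^2$ on $B\times B$, and partitioning $B$ into its maximal sub-balls of radius $r\absolute{\pi}$ gives
\[
\int_B\absolute{z-y}^2\psi_{B,j}(y)\,\absolute{dy}
=\psi_{B,j}(z)\Bigl(\int_{\absolute{u}\le r\absolute{\pi}}\absolute{u}^2\,\absolute{du}-r^{3}\absolute{\pi}\Bigr),
\]
and the bracket is strictly negative, not zero. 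Hence $\mathcal{A}_\theta\psi_{B,j}=\lambda_B\,\psi_{B,j}$ with $\lambda_B\neq 0$ depending on the radius of $B$, and the wavelet eigenvalue of $\mathcal{H}_\theta$ is $\lambda_B-\deg_{\mathcal{H}_\theta}(x)$ rather than the bare $-\deg_{\mathcal{H}_\theta}(x)$. (A secondary gap: for $k=0$ your claim $\absolute{1-zy}=1$ also fails when $z$ reduces to $\pm 1$ modulo $\pi$.) The paper's own proof simply asserts the eigenvalue without computation, so on this point you go further than the paper---but the specific claim $\mathcal{A}_\theta\psi(z)=0$ does not hold. The qualitative conclusions (wavelets are eigenfunctions, $L^2(E_q(K))_0$ is $\mathcal{H}_\theta$-invariant, the spectrum is pure point) survive your argument; only the precise eigenvalue identification needs this correction.
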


This is Theorem \ref{firstTheorem} of the Introduction.

\begin{proof}
Since
\[
\langle \psi_{B,j},\eta_\ell\rangle_\omega=0
\]
for any pair $(B,j)$ consisting of a
ball $B\subset A_K(q)$ and $j\in O_K/\mathfrak{m}_K$ on the one hand, and of $\ell\in\mathset{0,\dots,v(q)-1}$, it follows immediately that the given decomposition is an orthogonal decomposition of $L^2(E_q(K))$.

\smallskip
Clearly, the space $L^2(E_q(K))_\sigma$ is of finite dimension $v(q)\in\mathds{N}$, spanned by the indicator functions $\eta_\ell$ with $\ell=0,\dots,v(q)-1$. 
Its orthogonal complement $L^2(E_q(K))_\sigma^\perp$ contains
the space $L^2(E_q(K))_0$ spanned by the wavelets, and these form an orthonormal set in $L^2(E_q(K))^\perp_\sigma$. In order to see the converse inclusion, observe that $L^2(E_q(K))_0$ is isomorphic to a direct sum of spaces $L^2(B_r(a))_0$, where the $B_r(a)$ are $p$-adic balls  forming a finite disjoint covering of $E_q(K)$, and
\[
L^2(B_r(a))=\mathds{C}\,\Omega(x\in  B_r(a))\oplus L^2(B_r(a))
\]
with $L^2(B_r(a))$ having an orthonormal basis consisting of Kozyrev wavelets supported in $B_r(a)$, cf.\ \cite[Prop.\ 2]{ZunigaEigen} (notice the missing subscript $_0$ for the space in the last equation of \emph{loc.\ cit.}). The only difference is that in the actual decomposition of $L^2(E_q(K),\absolute{\omega})$, each of the spaces $L^2(B_r(a))$ uses a rescaled Haar measure, where the rescaling comes from the invariant differential form $\omega$.
This now proves the asserted 
orthonormal basis of $L^2(E_q(K),\absolute{\omega})$.

\smallskip
The eigenvalue of $\mathcal{H}_\theta$ corresponding to a wavelet $\psi_{B,j}$ with $B\subset S_k(x)\subset A_K(q)$ is now readily seen to be $-\deg_{\mathcal{H}_\theta}(x)$, as asserted. This proves the Theorem.
\end{proof}

\begin{remark}
Since the support of a wavelet may be arbitrarily small, it follows that the operator $\mathcal{H}_\theta$ is not a compact linear operator on $L^2(E_q(K))$.
\end{remark}

\section{The heat equation}

Here, the heat equation
\begin{align}\label{heatEquation}
\left(\frac{\partial}{\partial t}-\epsilon\mathcal{H}_\theta\right)&u(x,t)=0,&(\epsilon>0)
\end{align}
for the Tate elliptic curve $E_q(K)$ is studied. The goal is to answer the corresponding Cauchy Problem, verify that the heat operator $\epsilon\mathcal{H}_\theta$ generates a Markov process on $E_q(K)$ and write down the fundamental solution of (\ref{heatEquation}).
\newline

The approach, as suggested by the anonymous referee, will be to do this in a much more general setting in the following subsection, and then to specialise that result to the case of the heat operator $\epsilon\mathcal{H}_\theta$ on $E_q(K)$. Subsection \ref{section_Berkovich} then will study a corresponding diffusion on the Berkovich-analytification of $E_q$.

\subsection{Heat equation on $K$-analytic manifolds}

Let $X$ be an $n$-dimensional $K$-analytic manifold, and let $\omega$ be a regular $n$-form on $X$. The gauge measure $\absolute{\omega}$ from
\cite[Ch.\ II.2.2]{WeilAAG}
can be defined in any dimension $n\in\mathds{N}$ outside of the vanishing set $V(\omega)\subset X$. Again, using \cite[Lem.\ 3.1]{Yasuda2017}, it can be seen that $V(\omega)$ has measure  zero, so $\absolute{\omega}$ again has a natural extension to all of $X$. The idea to follow here is to study operators of the form
\begin{align}\label{advection}
\mathcal{J}f(x)=\int_X
\left\{j(x,y)f(y)-j(y,x)f(x)\right\}\absolute{\omega(y)}
\end{align}
as considered in \cite{ZunigaEigen,ZunigaRugged}. In order to do this, write
\[
X\times X=E_+\cup E_-
\]
with
\[
E_+\cap E_-=\Delta_X:=\mathset{(x,x)\mid x\in X}
\]
and a map
\[
\iota\colon E_+\to E_-,\;(x,y)\mapsto (y,x)
\]
which is bijective. Let 
\[
j_+\colon E_+\to\mathds{R}_{\ge0},\quad
j_-\colon E_-\to\mathds{R}_{\ge0}
\]
be maps with
\[
j_+|_{\Delta_X}=j_-|_{\Delta_X}
\]
satisfying the following hypothesis:
\begin{align}\label{hypothesis}
j_-(x,y)\le j_+(y,x)\quad\text{for}\quad (x,y)\in E_-
\end{align}
and set
\begin{align*}
\mathcal{J}_-f(x)
&=\int_{\mathset{y\colon (x,y)\in E_-}}
\left\{j_-(x,y)f(y)-j_+(y,x)f(x)\right\}\absolute{\omega(y)}
\\
\mathcal{J}_+f(x)
&=\int_{\mathset{y\colon (x,y)\in E_+}}
\left\{
j_-(y,x)f(y)-j_+(x,y)f(x)
\right\}\absolute{\omega(y)}
\end{align*}
in order to obtain the operator
\[
\mathcal{J}=\mathcal{J}_-+\mathcal{J}_+
\]
which acts on  the space $C_0(X,\mathds{R})$ of continuous functions vanishing at infinity via (\ref{advection}). 

\begin{Lemma}\label{maxPos}
The operator $\mathcal{J}$
satisfies the positive maximum principle.
\end{Lemma}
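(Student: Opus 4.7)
The plan is to verify the positive maximum principle directly from the decomposition $\mathcal{J} = \mathcal{J}_- + \mathcal{J}_+$. Concretely, I fix $f \in C_0(X,\mathds{R})$ together with a point $x_0 \in X$ at which $f$ attains a non-negative global maximum, set $M := f(x_0) = \sup_{x \in X} f(x) \ge 0$, and estimate each piece of $\mathcal{J}f(x_0)$ separately.

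The central algebraic manipulation is to add and subtract $f(x_0)$ inside each integrand so as to split it into two parts with definite signs. For $\mathcal{J}_-$ this gives
\[
\mathcal{J}_-f(x_0) = \int_{\mathset{y\colon(x_0,y)\in E_-}} j_-(x_0,y)\bigl(f(y)-f(x_0)\bigr)\absolute{\omega(y)} + f(x_0)\int_{\mathset{y\colon(x_0,y)\in E_-}}\bigl(j_-(x_0,y)-j_+(y,x_0)\bigr)\absolute{\omega(y)}.
\]
In the first integral the non-negative kernel $j_-(x_0,y)$ multiplies $f(y) - f(x_0) \le 0$, so the contribution is $\le 0$. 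In the second integral, hypothesis (\ref{hypothesis}) applied at $(x_0,y) \in E_-$ yields $j_-(x_0,y) - j_+(y,x_0) \le 0$; multiplying by the non-negative constant $f(x_0) = M$ preserves the sign, so $\mathcal{J}_-f(x_0) \le 0$.

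An analogous rearrangement for $\mathcal{J}_+f(x_0)$ produces the same two-term structure, but now with $j_-(y,x_0)$ as the jump kernel and the coefficient of $f(x_0)$ equal to $j_-(y,x_0) - j_+(x_0,y)$. Since $(x_0,y) \in E_+$ means $(y,x_0) \in E_-$ under the involution $\iota$, hypothesis (\ref{hypothesis}) applied at $(y,x_0)$ forces $j_-(y,x_0) \le j_+(x_0,y)$, so this coefficient is again non-positive; and the first term is $\le 0$ for the same reason as before. Summing gives $\mathcal{J}f(x_0) \le 0$, which is the positive maximum principle.

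I do not foresee a substantial obstacle: the whole point of introducing the asymmetric decomposition together with hypothesis (\ref{hypothesis}) is precisely to secure sign control on the coefficient of $f(x_0)$ on each of $E_-$ and $E_+$. The only technical subtlety is that the shared boundary $E_+ \cap E_- = \Delta_X$ does not contribute to either integral, since for fixed $x_0$ the slice $\mathset{y\colon y = x_0}$ is a single point and hence $\absolute{\omega}$-null on the positive-dimensional manifold $X$; otherwise one would have to rule out double-counting the diagonal contribution.
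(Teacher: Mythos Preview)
Your proof is correct and follows essentially the same approach as the paper. The paper's version is more compressed: it works directly with the single kernel $j$, bounds $j(x_0,y)f(y)\le j(x_0,y)f(x_0)$ in one step, and then invokes hypothesis~(\ref{hypothesis}) to conclude $\int_X\{j(x_0,y)-j(y,x_0)\}\absolute{\omega(y)}\le 0$; your explicit splitting into $\mathcal{J}_-$ and $\mathcal{J}_+$ and the add-and-subtract of $f(x_0)$ simply unpacks that same estimate.
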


\begin{proof}
Let $f\in C_0(X,\mathds{R})$, and let $x_0\in X$ be a place where $f$ takes its maximum, assumed positive.
Then
\begin{align*}
\mathcal{J}f(x_0)
&\le\int_X \left\{ j(x_0,y)
-j(y,x_0)
\right\}\absolute{\omega(y)}f(x_0)\stackrel{(*)}{\le} 0\cdot f(x_0)\le 0
\end{align*}
where $(*)$ holds true because of (\ref{hypothesis}).
\end{proof}

In order to be able to use the other conditions of the Hille-Yosida Theorem, observe that since $X$ need not be compact,
and $\mathcal{J}$ may also be unbounded, a different approach than in the proof of \cite[Thm.\ 3.1]{ZunigaRugged}. The approach taken here is similar to that of the proof of \cite[Lem.\ 5.1]{brad_SchottkyDiffusion}.
\newline

First, denote $p_1,p_2\colon X\times X\to X$ the projection onto the first, and second coordinate, respectively. Let
$\mathcal{P}(j)\subset X\times X$
be the set of poles of the kernel function $j(x,y)$, and let 
\[
P_z=p_2\left(p_1^{-1}(z)\cap \mathcal{P}(j)\right)
\]
for $z\in X$. Now, fix an atlas $\mathcal{U}$ of $X$, and set
\[
U_k(z):=\mathset{y\in X\mid\text{locally w.r.t.\ $\mathcal{U}\colon d(y,P_z)\le\absolute{\pi}^k$}}
\]
for $k\in\mathds{N}$, where $d$ is the distance in $U\in\mathcal{U}$, viewed as an analytic subdomain of $K^n$.
Now, define
\[
X_k(z)=X\setminus U_k(z)
\]
and
\[
\deg_k(z)=\int_{X_k(z)}j(y,z)\absolute{\omega(y)}
\]
for $k\in\mathds{N}$ and $z\in X$.

\begin{Assumption}\label{assumptionProper}
The manifold $X$ is assumed to be an open submanifold of a compact $K$-analytic manifold $\bar{X}$, and that $X\setminus\bar{X}$ is  a zero set w.r.t.\ the measure $\absolute{\omega}$.
The measure $\absolute{\omega}$ itself is assumed integrable on any chart $U\to K^n$, i.e.\ the differential form $\omega$ being on $U$ of the form
\[
\omega|_U=f_U\,dx_1\wedge\dots\wedge dx_n
\]
with $f_U$ a $K$-analytic function on $U$, satisfies
\[
\int_U\absolute{f_U}\absolute{dx_1\wedge\dots\wedge dx_n}<\infty
\]
where
$\absolute{dx_1\wedge\dots\wedge dx_n}$ is the normalised Haar measure on $K^n$.
\end{Assumption}

\begin{Assumption}\label{assumptionPoles}
It is assumed that $\mathcal{P}(j)$ is nowhere dense in $X\times X$, and that
\[
\mu_{X^2}(\mathcal{P}(j))=\int_{\mathcal{P}(j)}\absolute{\omega}\wedge\absolute{\omega}=0
\]
holds true.
\end{Assumption}

\begin{Assumption}\label{assumptionDegree}
It is assumed that
\[
\deg_k(z)<\infty
\]
for all $z\in X$ and $k\in\mathds{N}$. 
\end{Assumption}

Although the following result is not needed in the full generality of this subsection, it is nevertheless of independent interest.

\begin{thm}\label{AssumptionsFeller}
Under Assumptions \ref{assumptionProper}, \ref{assumptionPoles}, \ref{assumptionDegree}, and Hypothesis (\ref{hypothesis}), the linear operator $\mathcal{J}$ generates a Feller semigroup $e^{t\mathcal{J}}$ ($t\ge0$) on $C_0(X,\mathds{R})$.
\end{thm}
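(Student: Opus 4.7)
My plan is to verify the hypotheses of the Hille--Yosida--Ray characterization of Feller generators on $C_0(X,\mathds{R})$: namely, (i) $\mathcal{J}$ has a dense domain, (ii) $\mathcal{J}$ satisfies the positive maximum principle, and (iii) for some $\lambda>0$, the range of $\lambda I-\mathcal{J}$ is dense in $C_0(X,\mathds{R})$. Item (ii) is already supplied by Lemma \ref{maxPos}.

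For (i), I would take as the core of the domain the space $\mathcal{D}(X)$ of locally constant, compactly supported functions, which is dense in $C_0(X,\mathds{R})$ because $X$ is a $K$-analytic manifold (so has a basis of compact open subsets on each chart). Using Assumption \ref{assumptionProper} (integrability of $\absolute{\omega}$ on charts) and Assumption \ref{assumptionDegree} together with Assumption \ref{assumptionPoles} (the pole locus is nowhere dense and has measure zero), I would check that $\mathcal{J}f$ is well-defined for $f\in\mathcal{D}(X)$, belongs to $C_0(X,\mathds{R})$, and that the operator is closable. The compact support of $f$ reduces the first inner integral in (\ref{advection}) to a piece involving $\int_{\supp f}j(x,y)\absolute{\omega(y)}$, finite by Assumption \ref{assumptionDegree}, while the second piece $f(x)\int_X j(y,x)\absolute{\omega(y)}$ is controlled similarly; vanishing at infinity follows from $X\setminus\bar{X}$ being negligible in Assumption \ref{assumptionProper} and local constancy of $f$ away from its boundary.

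For (iii), which is the main obstacle, I would follow the truncation strategy indicated by the reference to \cite[Lem.~5.1]{brad_SchottkyDiffusion}. Define truncated kernel functions by restricting the inner integrations to $X_k(z)=X\setminus U_k(z)$, producing operators
\[
\mathcal{J}_k f(x)=\int_{X_k(x)}\bigl\{j(x,y)f(y)-j(y,x)f(x)\bigr\}\absolute{\omega(y)}.
\]
By Assumption \ref{assumptionDegree}, each $\mathcal{J}_k$ is a \emph{bounded} linear operator on $C_0(X,\mathds{R})$ with norm at most $2\sup_z\deg_k(z)$ on suitable subsets, and the argument of Lemma \ref{maxPos} still applies, so each $\mathcal{J}_k$ satisfies the positive maximum principle. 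A bounded operator satisfying the positive maximum principle generates by the exponential formula a strongly continuous contraction semigroup $T_k(t)=e^{t\mathcal{J}_k}$ that is positivity-preserving and sub-Markov, i.e., a Feller semigroup.

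The final step is to pass to the limit $k\to\infty$. Using Assumption \ref{assumptionPoles} (negligibility of $\mathcal{P}(j)$), dominated convergence yields $\mathcal{J}_k f\to\mathcal{J}f$ for $f\in\mathcal{D}(X)$, and with a careful choice of atlas $\mathcal{U}$ this convergence is uniform on $X$ because $f$ has compact support and the integrand vanishes outside a finite union of chart neighbourhoods. Invoking the Trotter--Kato approximation theorem then gives a strongly continuous limit semigroup $T(t)$ on $C_0(X,\mathds{R})$ whose generator extends $\mathcal{J}$ on $\mathcal{D}(X)$; the Feller properties (positivity, contractivity) are preserved under strong limits of positive contractions, so $T(t)$ is Feller. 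The hard part, as I see it, is justifying the uniform convergence $\mathcal{J}_k f\to\mathcal{J}f$ rather than merely pointwise convergence, since this is what Trotter--Kato requires and where Assumptions \ref{assumptionProper}--\ref{assumptionDegree} must be combined most delicately.
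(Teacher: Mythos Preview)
Your overall three-step Hille--Yosida--Ray scaffolding matches the paper's, and step (ii) via Lemma \ref{maxPos} is fine. But your handling of step (iii) diverges from the paper and, as written, has a genuine gap.

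The paper does \emph{not} use Trotter--Kato. Instead, it attacks the range condition head-on by rewriting $(\eta I-\mathcal{J})u=h$ as a fixed-point equation $(I-T)u=h/(\eta-\deg)$ and approximating the (possibly unbounded) operator $T$ by the truncated operators
\[
T_k u(z)=\frac{1}{\eta-\deg_k(z)}\int_{X_k(z)} j(y,z)u(y)\,\absolute{\omega(y)},
\]
shows $\norm{T_k}<1$ for $k$ large so that $I-T_k$ is invertible by Neumann series, produces solutions $u_k$ of $(\eta I-\mathcal{J}_k)u_k=h$ for $h\in\mathcal{D}(X,\mathds{R})$, and then proves directly that $(u_k)$ is Cauchy in $C_0(X,\mathds{R})$ with limit $u$ solving $(\eta I-\mathcal{J})u=h$. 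This manufactures preimages under $\eta I-\mathcal{J}$ and thereby verifies the range condition without appealing to any abstract approximation theorem.

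Your route via Trotter--Kato is circular at the crucial point. Every standard form of Trotter--Kato (e.g.\ \cite[Ch.~1, Thm.~6.1]{EK1986}) that produces a limit \emph{generator} from the convergence $\mathcal{J}_k f\to\mathcal{J}f$ on a dense set $D$ still requires, as a hypothesis, that $(\lambda-\mathcal{J})D$ be dense for some $\lambda>0$---precisely condition (iii). Without it you obtain at best a limit semigroup whose generator $B$ \emph{extends} $\mathcal{J}|_{\mathcal{D}(X)}$, but you cannot conclude that $\mathcal{D}(X)$ is a core for $B$, i.e.\ that $\overline{\mathcal{J}}=B$. So ``invoking Trotter--Kato'' does not sidestep the range condition; it presupposes it. A secondary issue: your boundedness claim $\norm{\mathcal{J}_k}\le 2\sup_z\deg_k(z)$ needs $\sup_z\deg_k(z)<\infty$, which Assumption \ref{assumptionDegree} gives only pointwise, not uniformly; the paper's contraction estimate $\norm{T_k}<1$ is set up to avoid needing this uniform bound.
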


\begin{proof}
This is shown by checking the requirements for the Hille-Yosida-Ray Theorem \cite[Ch.\ 4, Lem.\ 2.1]{EK1986}, cf.\ 1.-3.\ below.
\newline

1. The domain of $\mathcal{J}$ is dense in $C_0(X,\mathds{R})$.
This follows from Assumption \ref{assumptionProper}.
\newline

2. The operator $\mathcal{J}$ satisfies the positive maximum principle. This was proven in Lemma \ref{maxPos}.
\newline

3. $\rank(\eta I-\mathcal{J})$ is dense in $C_0(X,\mathds{R})$ for some $\eta>0$. Since $\mathcal{J}$ could be unbounded, a proof as in \cite[Thm.\ 3.1]{ZunigaRugged} does not cover all cases here. Therefore, what follows is modelled after the proof of \cite[Lem.\ 5.1]{brad_SchottkyDiffusion}. The task is to find a solution of the equation
\begin{align}\label{taskEq}
(\eta I-\mathcal{J})u=h
\end{align}
for some $\eta>0$, and $h$ in some dense subspace of $C_0(X,\mathds{R})$. The equation can be formally rewritten as
\begin{align}\label{rewriteEq}
u(z)-\frac{\int_X j(z,y)u(y)\absolute{\omega(y)}}{\eta-\deg(z)}=\frac{h(z)}{\eta-\deg(z)}
\end{align}
with
\[
\deg(z)=\int_X j(y,z)\absolute{\omega(y)}
\]
which possibly does not converge, as $\mathcal{J}$ might be unbounded. For this reason, study the operator
\[
T_k u(z)=\frac{\int_{X_k(z)} j(y,z)u(y)\absolute{\omega(y)}}{\eta-\deg_k(z)}
\]
for $k>>0$. Now,
\[
\absolute{T_k u(z)}\le
\frac{\deg_k(z)}{\absolute{\eta-\deg_k(z)}}\norm{u}_\infty<\infty
\]
by Assumption \ref{assumptionDegree}.
This implies that
\[
\norm{T_k}\le\frac{1}{1-\eta/\deg_k(z)}<1
\]
for $k>>0$. Hence, $I-T_k$ has a bounded inverse as an operator on $C_0(X,\mathds{R})$. Consequently, the range of $I-T_k$ is dense in $C_0(X,\mathds{R})$ for $k>>0$.
Now, let $h\in\mathcal{D}(X,\mathds{R})$, the space of locally constant real-valued functions with compact support, and let $u_k,u_\ell\in C_0(X,\mathds{R})$ be solutions of
\begin{align*}
(I-T_k)u_k&=\frac{h}{\eta-\deg_k}
\\
(I-T_\ell)u_\ell&=\frac{h}{\eta-\deg_\ell}
\end{align*}
for $k,\ell>>0$. Then
\begin{align}\label{CauchySeq}
u_k-u_\ell=
\frac{(I-T_\ell)(\eta-\deg_\ell)-(I-T_k)(\eta-\deg_k)}{(I-T_k)(I-T_\ell)(\eta-\deg_k)(\eta-\deg_\ell)}\,h
\end{align}
shows that $(u_k)$ is a Cauchy sequence w.r.t.\ $\norm{\cdot}_\infty$. Namely, first
\begin{align}\label{operatorNorm}
\norm{T_k}=\sup\limits_{z\in X}
\absolute{\frac{\deg_k(z)}{\eta-\deg_k(z)}}
=\sup\limits_{z\in X}\frac{1}{1-\eta/\deg_k(z)}
\end{align}
is strictly increasing to $1$ for $k\to\infty$. Hence, $T_k$ converges to a bounded linear operator $T$ on $C_0(X,\mathds{R})$. Secondly, the numerator of the right hand side of (\ref{CauchySeq}) is
\[
\eta(T_k-T_\ell)+(\deg_k-\deg_\ell)+(T_\ell\deg_\ell-T_k\deg_k)
\]
whose first and second terms in norm become arbitrarily small as $\ell\ge k\to\infty$. The third term is
\[
T_\ell \deg_\ell-T_k\deg_k
=(T_\ell\deg_\ell-T_k\deg_\ell)+(T_k\deg_\ell-T_k\deg_k)
\]
both of whose summands converge to $0$ as $\ell\ge k\to\infty$. It follows that $u_k$ convertes to some $u\in C_0(X,\mathds{R})$ which is seen to be a solution of (\ref{taskEq}) as follows:
Namely, $(\eta+\deg_k)T_k$ converges to $(\eta+\deg)T$ for $k\to\infty$, where the limit operator coincides with the operator
\[
Au(z)=\int_X j(z,y)u(y)\absolute{\omega(y)}
\]
which shows that the operator
\[
T=\frac{A}{\eta-\deg}
\]
appearing in (\ref{rewriteEq}) is bounded. Now, $u_k$ is a solution of
\[
(\eta I-\mathcal{J}_k)u_k=h
\]
with
\[
\mathcal{J}_k=(\eta-\deg_k)T_k-\deg_k
\]
which converges to $\mathcal{J}$ for $k\to\infty$. As $u_k$ converges to $u$, it follows that
\[
(\eta I-\mathcal{J})u=(\eta I-\mathcal{J}_k)u+(\mathcal{J}_k-\mathcal{J})u
\]
where 
\[
(\eta I-\mathcal{J}_k)u
=(\eta I-\mathcal{J}_k)u_k+\mathcal{J}_k(u_k-u)
=h+\mathcal{J}_k(u_k-u)
\]
converges to $h$ for $k\to\infty$, and
\[
(\mathcal{J}_k-\mathcal{J})u\to 0
\]
for $k\to\infty$. Hence, $u$ is a solution of (\ref{taskEq}). This proves that $\rank(\eta I-\mathcal{J})$ contains $\mathcal{D}(X,\mathds{R})$ which is dense in $C_0(X,\mathds{R})$.

\smallskip
Since limit operator of $T_k$does not depend on the choice of an atlas $\mathcal{U}$, cf.\ (\ref{operatorNorm}), it follows that the existence of the solution $u$ of (\ref{taskEq}) does also not depend on the choice of an atlas.
This now proves the assertion.
\end{proof}

\begin{thm}\label{MarkovPropertyMf}
There exists a probability measure $p_t(x,\cdot)$ with $t\ge0$, $x\in X$, on the Borel $\sigma$-algebra of $X$ such that the Cauchy problem 
\begin{align*}
u(\cdot,t)&\in C^1\left([0,\tau],C_0(X,\mathds{R})\right)
\\
\frac{\partial}{\partial t}u(x,t)&
=\int_X \left(j(x,y)u(y,t)-j(y,x)u(x,t)\right)\absolute{\omega(t)},&t\in[0,\tau],\;x\in X
\\
u(x,0)&=u_0(x)\in C_0(X,\mathds{R})
\end{align*}
has a unique solution of the form
\[
h(t,x)=\int_{X}h_0(y)\,p_t(x,\absolute{\omega(y)})
\]
In addition, $p_t(x,\cdot)$ is the transition function of a strong Markov process whose paths are right continuous and have no discontinuities other than jumps.
\end{thm}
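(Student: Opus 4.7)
The plan is to leverage the Feller semigroup $(T_t)_{t\ge 0}:=(e^{t\mathcal{J}})_{t\ge 0}$ produced by Theorem \ref{AssumptionsFeller}, then invoke the classical correspondence between Feller semigroups and Feller processes to obtain both the transition function $p_t(x,\cdot)$ and the Markov process.

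First, for each $t\ge 0$ and $x\in X$ the map $f\mapsto (T_t f)(x)$ is a positive linear functional on $C_0(X,\mathds{R})$ of norm at most one, so the Riesz representation theorem yields a Borel sub-probability measure $p_t(x,\cdot)$ on $X$ with
\[
(T_t f)(x)=\int_X f(y)\,p_t(x,dy),\qquad f\in C_0(X,\mathds{R}).
\]
The semigroup identity $T_{s+t}=T_sT_t$ immediately rephrases as Chapman--Kolmogorov for $p_t$, and strong continuity of $T_t$ at zero gives $p_t(x,\cdot)\Rightarrow\delta_x$ as $t\downarrow 0$.

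Next I would upgrade $p_t(x,\cdot)$ from a sub-probability to an honest probability measure. Choosing a monotone sequence $\chi_n\in C_0(X,\mathds{R})$ with $\chi_n\uparrow\mathbf{1}$, the dominated convergence theorem yields $p_t(x,X)=\lim_n(T_t\chi_n)(x)$. Formally $\mathcal{J}\mathbf{1}=0$ by antisymmetry of the integrand $j(x,y)\mathbf{1}(y)-j(y,x)\mathbf{1}(x)$; the technical assumption backing Theorem \ref{secondTheorem} is precisely the conservativeness needed to push this identity through the limit (forcing $T_t\mathbf{1}=\mathbf{1}$, hence $p_t(x,X)=1$). Here Hypothesis (\ref{hypothesis}) combined with the local integrability in Assumption \ref{assumptionDegree} plays the role of ensuring there is no loss of mass.

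For the Cauchy problem, take $u_0$ in the domain of the generator $\mathcal{J}$ and set $u(t,\cdot):=T_tu_0$. Standard Hille--Yosida theory gives $u\in C^1([0,\tau],C_0(X,\mathds{R}))$ with $\partial_t u=\mathcal{J}u$ and $u(0,\cdot)=u_0$; substituting the integral representation yields the asserted form $h(t,x)=\int_X h_0(y)\,p_t(x,\absolute{\omega(y)})$. Uniqueness follows because $\mathcal{J}$ is closed and $(T_t)$ is the only strongly continuous semigroup with this generator, so any other $C^1$-solution must coincide with $T_tu_0$ by differentiating $s\mapsto T_{t-s}u(s,\cdot)$.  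Finally I would invoke \cite[Ch.\ 4, Thm.\ 2.7]{EK1986}: every Feller semigroup on a locally compact Hausdorff space is the transition semigroup of a strong Markov process whose sample paths are c\`adl\`ag, i.e.\ right-continuous with only jump discontinuities. The main obstacle I anticipate is the conservativeness step, because the asymmetry permitted by (\ref{hypothesis}) does not a priori rule out a defect $\int_X\{j(x,y)-j(y,x)\}\absolute{\omega(y)}>0$; this is precisely the point where the ``technical assumptions'' hinted at in the Introduction must be invoked explicitly.
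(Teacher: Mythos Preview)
Your proposal is correct and follows essentially the same route as the paper: invoke the Feller semigroup from Theorem \ref{AssumptionsFeller}, pass to the associated transition function via the Riesz correspondence, and then use the standard Feller-semigroup/Markov-process correspondence to obtain a strong Markov process with c\`adl\`ag paths. The paper's own proof is considerably more terse---it simply cites \cite[Thm.\ 2.15]{Taira2009} for the transition function and \cite[Thm.\ 2.12]{Taira2009} for the Markov process, whereas you spell out the Riesz representation, the Chapman--Kolmogorov equations, the Hille--Yosida argument for the Cauchy problem, and cite \cite{EK1986} instead. Your explicit flagging of the conservativeness step (upgrading $p_t(x,\cdot)$ from a sub-probability to a probability measure) is a genuine point the paper leaves implicit inside the cited black-box theorems; it does not derail the argument, but it is indeed where the hypotheses earn their keep.
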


This is Theorem \ref{secondTheorem} of the Introduction.

\begin{proof}
The proof will  be the same as in the case of \cite[Thm.\ 4.2]{ZunigaNetworks}, and is given here for the convenience of the reader. 

\smallskip
Using the correspondence between Feller semigroups and transition functions, one sees from Theorem  \ref{AssumptionsFeller} that there is a uniformly stochastically continuous $C_0$-transition function $p_t(x,\absolute{\omega})$ satisfying condition (L) of \cite[Thm.\ 2.10]{Taira2009} such that
\[
\exp\left(t\mathcal{J}\right)h_0(x)
=\int_{X}h_0(y)\,p_t(x,\absolute{\omega(y)})
\]
for $h_0\in C_0(X,\mathds{R})$, cf.\ e.g.\ \cite[Thm.\ 2.15]{Taira2009}. Now, by using the correspondence between transition functions and Markov processes, there exists a strong Markov process whose paths are right continuous and have no discontinuities
other than jumps, see e.g.\ \cite[Thm.\ 2.12]{Taira2009}.
\end{proof}

Now, the following task will be addressed:

\begin{task}[Cauchy Problem]\label{CauchyProblem}
Find $h(t,x)\in C^1\left((0,\infty),E_q(K)\right)$ such that
\begin{align*}
\left(\frac{\partial}{\partial t}-\epsilon\mathcal{H}_\theta\right)h(t,x)&=0
\\
h(0,x)&=h_0(x)
\end{align*}
for $t\ge0$, $h_0\in C(E_q(K))$.
\end{task}

\begin{Corollary}\label{MarkovProperty}
There exists a probability measure $p_t(x,\cdot)$ with $t\ge0$, $x\in E_q(K))$, on the Borel $\sigma$-algebra of $E_q(K)$ such that the Cauchy problem (Task \ref{CauchyProblem})
has a unique solution of the form
\[
h(t,x)=\int_{E_q(K)}h_0(y)\,p_t(x,\absolute{\omega(y)})
\]
In addition, $p_t(x,\cdot)$ is the transition function of a strong Markov process whose paths are right continuous and have no discontinuities other than jumps.
\end{Corollary}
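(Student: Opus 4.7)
The plan is to deduce the Corollary directly from Theorem \ref{MarkovPropertyMf} by verifying the four hypotheses (Assumptions \ref{assumptionProper}, \ref{assumptionPoles}, \ref{assumptionDegree} and Hypothesis (\ref{hypothesis})) in the specific setting $X=E_q(K)$ with kernel $j(x,y)=\epsilon\,H_\theta(x,y)$ and measure $\absolute{\omega}$ from Lemma \ref{HaarMeasureEllipticCurve}. Since $H_\theta$ is symmetric in its arguments, the decomposition $j_+=j_-=\epsilon H_\theta$ makes Hypothesis (\ref{hypothesis}) trivially an equality, and the operator $\mathcal{J}$ from (\ref{advection}) agrees (up to the scalar $\epsilon$) with $\mathcal{H}_\theta$.

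Next I would dispatch the three assumptions. For Assumption \ref{assumptionProper}, note that $E_q(K)$ is itself compact (as the $K$-rational points of a projective variety over a local field), so one may take $\bar X=X=E_q(K)$, trivializing the conditions on $X\setminus\bar X$; local integrability of $\absolute{\omega}$ is provided by Lemma \ref{HaarMeasureEllipticCurve}, which gives $\absolute{\omega(u)}=\absolute{du}/\absolute{u}$ in the coordinate $u$ on the fundamental annulus $A_K(q)$, a finite measure there. For Assumption \ref{assumptionPoles}, the construction of $H_\theta$ explicitly cuts off the pole set $\mathcal{P}_g$ of $\absolute{g}$ by setting the kernel equal to $1$ on $\mathcal{P}_g$, and the Lemma preceding the definition of $\langle\cdot,\cdot\rangle_\omega$ shows $H_\theta\in\mathcal{D}(E_q(K)^2)$, so $H_\theta$ is everywhere finite and $\mathcal{P}(\epsilon H_\theta)=\emptyset$. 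For Assumption \ref{assumptionDegree}, Corollary \ref{degreeFunction} gives an explicit finite (and locally constant) formula for $\deg_{\mathcal{H}_\theta}(x)$, so in particular $\deg_k(z)<\infty$ for every $z$ and every $k$.

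With all hypotheses in force, Theorem \ref{MarkovPropertyMf} applied to $(X,\mathcal{J})=(E_q(K),\epsilon\mathcal{H}_\theta)$ yields a uniformly stochastically continuous $C_0$-transition function $p_t(x,\cdot)$ on the Borel $\sigma$-algebra of $E_q(K)$ such that
\[
e^{t\epsilon\mathcal{H}_\theta}h_0(x)=\int_{E_q(K)}h_0(y)\,p_t(x,\absolute{\omega(y)})
\]
for all $h_0\in C(E_q(K))=C_0(E_q(K),\mathds{R})\otimes\mathds{C}$ (here $C_0=C$ because $E_q(K)$ is compact). This function solves the Cauchy problem of Task \ref{CauchyProblem}, and uniqueness follows from the Feller semigroup property provided by Theorem \ref{AssumptionsFeller}. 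Finally, the correspondence between Feller transition functions and Markov processes (e.g.\ \cite[Thm.\ 2.12]{Taira2009}) furnishes the strong Markov process with right-continuous paths whose only discontinuities are jumps, completing the proof.

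I do not foresee a genuine obstacle in this argument, as the hard analytic work is already done in Theorem \ref{MarkovPropertyMf}; the only point requiring mild care is confirming that the cut-off definition of $H_\theta$ on $\mathcal{P}_g$ really produces a function in $\mathcal{D}(E_q(K)^2)$ rather than merely a measurable one, but this was settled earlier from Lemma \ref{absRationalFunction}.
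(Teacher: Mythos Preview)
Your proposal is correct and follows essentially the same approach as the paper: verify that the symmetric kernel $\epsilon H_\theta$ satisfies Hypothesis (\ref{hypothesis}) and Assumptions \ref{assumptionProper}--\ref{assumptionDegree}, then invoke Theorems \ref{AssumptionsFeller} and \ref{MarkovPropertyMf}. The paper's own proof says precisely this in two sentences, while you spell out the verification of each assumption explicitly (compactness of $E_q(K)$, $H_\theta\in\mathcal{D}(E_q(K)^2)$ hence $\mathcal{P}(j)=\emptyset$, and finiteness of the degree via Corollary \ref{degreeFunction}); your added detail is accurate and helpful.
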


\begin{proof}
Hypothesis (\ref{hypothesis}) is naturally satisfied by the kernel function. As Assumptions \ref{assumptionProper}, \ref{assumptionPoles}, and \ref{assumptionDegree} are clearly satisfied, Theorem \ref{AssumptionsFeller} can be applied, and thus Theorem \ref{MarkovPropertyMf} yields the assertions.
\end{proof}

As the corresponding semigroup is Feller, it describes a $\pi$-adic heat equation on $E_q(K)$. Consequently, there is a corresponding $\pi$-adic diffusion process in $E_q(K)$ attached to the heat equation (\ref{heatEquation}).

\begin{remark}
As it was  pointed out by the anonymous referee, it is possible to see Corollary \ref{MarkovProperty} as a special case of 
 of \cite[Thm.\ 1]{ZunigaEigen}, which itself finds an extension  to general types of $K$-analytic manifolds in Theorem \ref{MarkovPropertyMf} here.
\end{remark}

\subsection{Diffusion on the Berkovich-Analytification}\label{section_Berkovich}

In order to extend the diffusion theory to the Berkovich-analytification $E_q^{an}$ of the Tate curve $E_q$, it is necessary to solve the so-called Monge-Amp\`ere equation
\begin{align}\label{MongeAmpereEquation}
\mu=\lambda \, c_1(\mathcal{L},\norm{\cdot})
\end{align}
where $\mu$ is a positive Radon measure on $E_q^{an}$. What this means is to find $\lambda>0$ and a line bundle $L$ on $E_q^{an}$ such that  
 the \emph{Chambert-Loir measure} $c_1(\mathcal{L},\norm{\cdot})$  on $E_q^{an}$ coming from  a subharmonic metric $\norm{\cdot}$ on the line bundle $L$ satisfies equation (\ref{MongeAmpereEquation}). 
Since $E_q^{an}$ is a curve, this equation 
has been found to always have a solution by Thuillier in his dissertation \cite[Cor.\ 3.4.18]{Thuillier2005}. The solvability of the Monge-Amp\`ere equation in a much more general non-archimedean setting is settled in e.g.\
\cite{FGK2022} in the mixed characterstic case.
\newline

Recall that a \emph{harmonic function} on a weighted metrised graph $G$ 
is a pieceweise affine function  $f\colon G\to\mathds{R}$ such that for each point
\[
\lambda_x(f):=\sum\limits_{t\in T_x(G)}
w_x(t)\lambda_{x,t}=0
\]
where $T_x(G)$ is the set of all tangent directions in each point of $G$, $w_x(t)$ the weight function in the point $x$ along direction $t$, and $\lambda_{x,t}$ the linear part in the affine piece of $f$ in the point $x$ along $t$:
\[
f=f(x)+\lambda_{x,t}(f)t
\]
is the affine representation of $f$ 
for this particular pair $(x,t)$.
The corresponding map
\[
\ddc^c\colon A^0(E_q^{an})\to {A}^1(E_q^{an}),\;
f\mapsto\sum\limits_{x\in G}\lambda_{x}(f)\,\delta_x
\]
is called \emph{Laplacian}, and a the kernel of $\ddc^c$ consists of the harmonic functions in the space $A^0(E_q^{an})$ of global sections of piece-wise affine functions on $E_q^{an}$, whereas $A^1(E_q^{an})$ are  the global sections of the sheaf  of locally finite signed measures on $E^{an}_q$.
\newline

A 
subharmonic function on an open $U\subseteq E_q^{an}$ is a function
\[
u\colon U\to\mathds{R}\cup\mathset{-\infty}
\]
which is upper half-continuous, not identically $-\infty$ on any connected component of $U$, and for any strictly $K$-affinoid domain $Y$ of $E_q^{an}$ any harmonic function $h$ on $Y$, it holds true that
\[
u|_{\partial Y}\le h|_{\partial Y}\quad\Rightarrow\quad u|_Y\le h
\]
A subharmonic metric on an invertible sheaf $\mathcal{L}$ is a family of  functions
\[
\norm{\cdot}\colon 
\Gamma(U,\mathcal{L}^\times)\to\mathds{R}
\]
with $U\subset E_q^{an}$ running through the open subsets, 
such that $-\log\norm{s}$ is a subharmonic function on $U$, and for all open $U'\subset U\subset E_q^{an}$ and sections
$s'\in \Gamma(U',\mathcal{L}^\times)$, $s\in \Gamma(U,\mathcal{L}^\times)$, it holds true that
$s|_{U'}=fs'$ for some $f\in \Gamma(U',\mathcal{O}_{E_q^{an}}^\times)$
such that
\[
-\log\norm{s}|_{U'}=
-\log\norm{s'}-\log\absolute{f}
\]
Notice that the function $\log\absolute{f}$ is subharmonic \cite[Prop.\ 3.1.6]{Thuillier2005}.
\newline

If $(\mathcal{L},\norm{\cdot})$ is a metrised line bundle on $E_q^{an}$, then
\[
\ddc^c\log\norm{s}
\]
for a section $s\in \Gamma(U,\mathcal{L}^\times)$ is in general only a distribution on $A^1(E_q^{an})$. Such is called a \emph{current} of degree $1$ on $E^{an}_q$.
This current associated with $(\mathcal{L},\norm{\cdot})$ is denoted as
\[
c_1(\mathcal{L},\norm{\cdot})
\]
and is called its \emph{curvature form}.
It is positive, iff $\norm{\cdot}$ is a subharmonic metric on $\mathcal{L}$. And equation (\ref{MongeAmpereEquation}) asks for the existence of a curvature form representing a given positive Radon measure on $E_q^{an}$.
\newline

In order to relate the theory of metrised line bundles to the operator $\mathcal{H}_\theta$ on $E_q(K)$, observe that
the measure
\[
\mu_x=H_\theta(x,\cdot)\absolute{\omega(\cdot)}
\]
is a  positve Radon measure on $E_q(K)$. Hence, its pushforward
\[
\mu_{\sigma(x)}:=\sigma_*\mu_x
\]
is a  Radon measure on the skeleton $I(E_q^{an})$, and supported on $\sigma(E_q(K))$.
Define also
\[
\deg_\theta(\sigma(x)):=\int_{I(E_q^{an})}\mu_{\sigma(x)}
\]
for $x\in E_q(K)$,
and solve the Monge-Amp\`ere equation
\[
\delta_z=c_1(\mathcal{L}
,\norm{\cdot}_z)
\]
on the curve $E_q^{an}$,
which is possible according to Thuillier with an ample line bundle $\mathcal{L}$ on $E_q^{an}$.
Notice that this is in fact the Calabi-Yau problem, where an ample line bundle $\mathcal{L}$ on a projective $K$-variety $X$ is given, and the question is whether  for any given positive Radon measure $\nu$ on $X^{an}$ with $\nu(X^{an})=\deg_{\mathcal{L}}(X)$, there exists a continuous semipositive
metric $\norm{\cdot}$ on $\mathcal{L}$ such that
\[
\nu=c_1(\mathcal{L},\norm{\cdot})^n
\]
with $\dim(X^{an})=n$.
This has  been 
recently solved in several 
important instances, e.g.\  if $n=1$ and $X$ is a smooth (analytic) curve \cite{Thuillier2005},   in case the residue characteristic is zero and $X$ is smooth \cite{BFJ2016}, 
and in the mixed characteristic case for smooth $X$ \cite{FGK2022}.
\newline

A \emph{model} of the variety $X$ is a normal scheme $\mathcal{X}\to\Spec O_K$ which is flat and whose generic fibre is isomorphic to $X$. Given an ample line bundle $\mathcal{L}$ on $X$, a \emph{model metric} $\norm{\cdot}_{\mathscr{L}}$ on $\mathcal{L}$ is defined by an extension $\mathscr{L}\in \Pic(\mathcal{X})_{\mathds{Q}}$ of $\mathcal{L}$ to some model $\mathcal{X}$ of $X$. 
A model metric is \emph{semipositive}, if 
the line bundle $\mathcal{L}$ is nef, i.e.\ if  
\[
\deg_{\mathscr{L}}(C)\ge0
\]
for all proper curves $C$ on the special fibre $\mathcal{X}_s$ of $\mathcal{X}$.
A \emph{semipositive continuous} metric $\norm{\cdot}$ on $\mathcal{L}$ is a uniform limit of semipositive metrics on $\mathcal{L}$. This gives rise to the \emph{Chambert-Loir measure} $c_1(\mathcal{L},\norm{\cdot})^n$, a positive Radon measure on $X^{an}$ of mass $c_1(\mathcal{L})^n$, cf.\  \cite{CL2006}.
So, the Calabi-Yau problem is to find a suitable continuous semipositive metric on $\mathcal{L}$ such that the corresponding Chambert-Loir measure coincides with the given positive Radon measure.
\newline

In any case, 
 obtain now a measure
\[
\nu_{\sigma(x)}=\mu_{\sigma(x)}-\deg_\theta(\sigma(x))\delta_{\sigma(x)}
\]
on $E_q^{an}$
for $x\in E_q(K)$ which is supported on the skeleton $\sigma(E_q(K))\subset I(E_q^{an})$.
This yields an operator:
\[
\mathcal{H}_{\theta,\sigma}\phi(z)
=\int_{I(E_q^{an})}
\phi\,d\nu_z
\]
with  $\phi\in\mathcal{D}(E_q^{an})$,
where $\nu_z$ is the measure defined as
\[
\nu_z=\begin{cases}
\nu_{\sigma(x)},&z=\sigma(x)\;\text{for some}\;x\in E_q(K)
\\
0,&\text{otherwise}
\end{cases}
\]
for $z\in E_q^{an}$.

\begin{definition}
A function $\phi\colon E^{an}_q\to\mathds{C}$ is \emph{radial}, if
\[
\phi(x)=\phi(\sigma(x))
\]
for all $x\in E_q^{an}$.
The space of all radial test functions is denoted as $\mathcal{D}(E_q^{an})_\sigma$.
\end{definition}

Helpful now is the averaging operator:
\[
\mathcal{D}(E_q^{an})\to
\mathcal{D}(E_q^{an})_\sigma,\;
\phi
\mapsto\average(\phi)
\]
with $\average(\phi)$ defined as
\[
x\mapsto
\left(\int_{U(x)}\absolute{\sigma_*\omega}\right)^{-1}
\int_{U(x)}\phi(y)\absolute{\sigma_*\omega(y)}
\]
where 
\[
U(x):=\sigma^{-1}(\sigma(x))
\]
is an open neighbourhood of $x\in E_q^{an}$.
Clearly, functions of the form $\average(\phi)$ are precisely the radial functions. Using the obviously defined pairing
$\langle\cdot,\cdot\rangle_{\sigma_*\omega}$ on $L^2(E_q^{an},\absolute{\sigma_*\omega})$, obtain
the orthogonal decomposition
\[
L^2(E_q^{an},\absolute{\sigma_*\omega})=L^2(E_q^{an})_\sigma\oplus L^2(E_q^{an})_0
\]
where $L^2(E_q^{an})_\sigma$ is spanned by the radial functions, and whose orthogonal complement is $L^2(E_q^{an})_\sigma^\perp=L^2(E_q^{an})_0$. 
\newline

Let 
\[
U(\sigma(E_q(K))) := \sigma^{-1}(\sigma(E_q^{an}(K)))
\]
The  space
$L^2(U(E_q(K)))\subset L^2(E_q^{an},\absolute{\sigma_*\omega})$
is a closed subspace and thus has the same property
\begin{align}\label{orthogonalDecompositionU}
L^2(U(E_q(K)))=L^2(U(E_q(K)))_\sigma\oplus L^2(U(E_q(K)))_0
\end{align}
and the part $L^2(U(E_q(K)))_0$ is spanned
by the wavelets on $E_q(K)$. The reason is that that space coincides with the space of $L^2$-functions on $E_q^{an}(K)$ supported outside the skeleton averaging to zero.

\begin{Lemma}\label{skeletonOperator}
Assume that $\phi\in \mathcal{D}(E_q^{an})$ is  a radial function. 
It holds true that 
\[
\mathcal{H}_{\theta,\sigma}\phi(z)=\begin{cases}
\mathcal{H}_\theta\phi(x),&\exists\,x\in E_q(K)\;\text{with}\;\sigma(x)=\sigma(z)
\\
0,&\text{otherwise}
\end{cases}
\]
where $z\in E^{an}_q$.
\end{Lemma}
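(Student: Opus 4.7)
The plan is to simply unpack the definitions of $\mathcal{H}_{\theta,\sigma}$, $\nu_z$, $\mu_{\sigma(x)}$, $\deg_\theta$, and use radiality of $\phi$ to reduce the integral on the skeleton to an integral on $E_q(K)$.

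First, I would fix $z\in E_q^{an}$ and split into the two cases dictated by the definition of $\nu_z$. In the second case, where there is no $x\in E_q(K)$ with $\sigma(x)=\sigma(z)$, the measure $\nu_z$ is simply the zero measure by definition, so the integral $\int_{I(E_q^{an})}\phi\,d\nu_z$ trivially vanishes, matching the second branch of the asserted formula.

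In the first case, pick $x\in E_q(K)$ with $\sigma(x)=\sigma(z)$ and write
\[
\mathcal{H}_{\theta,\sigma}\phi(z)=\int_{I(E_q^{an})}\phi\,d\nu_{\sigma(x)}=\int_{I(E_q^{an})}\phi\,d\sigma_*\mu_x-\deg_\theta(\sigma(x))\,\phi(\sigma(x)).
\]
The key step is to apply the change-of-variables formula for the pushforward measure:
\[
\int_{I(E_q^{an})}\phi\,d\sigma_*\mu_x=\int_{E_q(K)}(\phi\circ\sigma)(y)\,d\mu_x(y)=\int_{E_q(K)}\phi(\sigma(y))\,H_\theta(x,y)\,\absolute{\omega(y)}.
\]
Since $\phi$ is radial, $\phi(\sigma(y))=\phi(y)$ for every $y$, so this reduces to $\int_{E_q(K)}H_\theta(x,y)\phi(y)\,\absolute{\omega(y)}$. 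By the definition of $\deg_\theta$ together with the identity $\sigma_*\mu_x(I(E_q^{an}))=\mu_x(E_q(K))$, we have $\deg_\theta(\sigma(x))=\deg_{\mathcal{H}_\theta}(x)$; also $\phi(\sigma(x))=\phi(x)$ by radiality.

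Putting the two pieces together,
\[
\mathcal{H}_{\theta,\sigma}\phi(z)=\int_{E_q(K)}H_\theta(x,y)\phi(y)\,\absolute{\omega(y)}-\deg_{\mathcal{H}_\theta}(x)\,\phi(x)=\int_{E_q(K)}H_\theta(x,y)\bigl(\phi(y)-\phi(x)\bigr)\,\absolute{\omega(y)},
\]
which is precisely $\mathcal{H}_\theta\phi(x)$, yielding the first branch. The only point requiring care is that the value does not depend on the choice of representative $x$ with $\sigma(x)=\sigma(z)$; this is immediate because radiality makes $\phi(x)$ depend only on $\sigma(x)$, and $\deg_{\mathcal{H}_\theta}$ is already known to be constant on fibres of $\sigma$ by the preceding lemma. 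I expect no substantial obstacle here; the content of the lemma is essentially a bookkeeping identity built into the construction of $\mathcal{H}_{\theta,\sigma}$ as the push-forward of the kernel measure $\mu_x$ compensated by the Dirac term at $\sigma(x)$.
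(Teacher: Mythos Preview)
Your proof is correct and follows the same approach as the paper, namely unwinding the definitions of $\nu_z$, $\mu_{\sigma(x)}$, and $\deg_\theta$; the paper's own proof is the single sentence ``This is clear from the fact that $\nu_{\sigma(x)}$ is supported on $\sigma(E_q(K))$'', and you have simply made explicit the change-of-variables and radiality steps that this sentence leaves implicit.
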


\begin{proof}
This is clear from the fact that $\nu_{\sigma(x)}$ is supported on $\sigma(E_q(K))$.
\end{proof}

\begin{Lemma}\label{waveletOperator}
Assume that $\phi\in L^2(U(E_q(K)))_0$. Then
\[
\mathcal{H}_{\theta,\sigma}\phi=\mathcal{H}_\theta\phi
\]
holds true.
\end{Lemma}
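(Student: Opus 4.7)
\emph{Proof plan.}
By linearity and density it suffices to verify the identity on a single Kozyrev wavelet $\phi=\psi_{B,j}$ with $B\subset S_k(x_0)\subset A_K(q)$. According to the characterisation of $L^2(U(E_q(K)))_0$ just recorded after the decomposition (\ref{orthogonalDecompositionU}), such a $\phi$ is supported outside the skeleton, so in particular $\phi(z)=0$ for every $z\in\sigma(E_q^{an}(K))$.

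First I would evaluate the left-hand side directly from the definition. The measure $\nu_z$ is zero unless $z\in\sigma(E_q(K))$, in which case it equals
$\nu_{\sigma(x)}=\sigma_*\mu_x-\deg_\theta(\sigma(x))\delta_{\sigma(x)}$,
which by construction is supported on the finite set $\sigma(E_q(K))$. Since $\phi$ vanishes at every point of this set, $\int_{I(E_q^{an})}\phi\,d\nu_z=0$ for every $z\in E_q^{an}$, giving $\mathcal{H}_{\theta,\sigma}\phi\equiv 0$.

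Next I would compute $\mathcal{H}_\theta\phi$ and show that it represents the same element in the ambient function space on $E_q^{an}$. Splitting the defining integral along the partition $A_K(q)=\bigsqcup_{\ell=0}^{v(q)-1}S_\ell$, each sphere $S_\ell$ not meeting the Berkovich open ball supporting $\phi$ contributes $\int_{S_\ell}H_\theta(z,y)\phi(y)\absolute{\omega(y)}$; here Lemma \ref{absRationalFunction} shows that $H_\theta(z,\cdot)$ is constant on $S_\ell$, whence Kozyrev's Lemma \ref{KozyrevIntegral} forces this integral to vanish. The remaining contribution is supported inside the Berkovich open ball containing $B$ and is, by Theorem \ref{SpectrumHeatOperator}, a scalar multiple of $\phi$ itself, and thus lies once more in $L^2(U(E_q(K)))_0$.

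The main obstacle is the bookkeeping required to place the two a priori differently defined outputs of $\mathcal{H}_{\theta,\sigma}$ and $\mathcal{H}_\theta$ into a common target space on $E_q^{an}$. This is effected via the orthogonal decomposition (\ref{orthogonalDecompositionU}) together with the pushforward of $\absolute{\omega}$ along $\sigma$: once the identification is in place, the residual $\phi$-supported piece of $\mathcal{H}_\theta\phi$ is absorbed by its membership in $L^2(U(E_q(K)))_0$, and both sides are identified with the zero section of $L^2(E_q^{an},\absolute{\sigma_*\omega})$, completing the argument.
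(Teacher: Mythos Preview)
Your argument contains a genuine gap in the final step. You correctly compute that $\mathcal{H}_{\theta,\sigma}\phi\equiv 0$ (since $\nu_z$ is supported on the skeleton and the wavelet vanishes there), and you correctly recall from Theorem~\ref{SpectrumHeatOperator} that $\mathcal{H}_\theta\phi=-\deg_{\mathcal{H}_\theta}(x_0)\,\phi$, which is a \emph{nonzero} scalar multiple of $\phi$. These two results are not equal, and your attempt to reconcile them by saying that ``both sides are identified with the zero section of $L^2(E_q^{an},\absolute{\sigma_*\omega})$'' is incorrect: the subspace $L^2(U(E_q(K)))_0$ is not the zero subspace --- it is the entire infinite-dimensional wavelet part of the orthogonal decomposition~(\ref{orthogonalDecompositionU}). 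Membership in $L^2(U(E_q(K)))_0$ does not make a function vanish; if it did, the decomposition would collapse and Corollary~\ref{MarkovProcessBerkovichTateCurve} (identifying the two spectra, including the nonzero degree eigenvalues) could not hold.

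What has gone wrong is a too-literal reading of the definition of $\nu_z$ combined with a missing identification. The paper's one-line proof (``follows immediately from the discussion before Lemma~\ref{skeletonOperator}'') relies on the tacit identification of $L^2(U(E_q(K)))_0$ with $L^2(E_q(K))_0$ via the fibre structure of $\sigma$: a wavelet on $E_q(K)$ is to be regarded as a function on $U(E_q(K))$ through this identification, not as a function literally supported only on type-I points and then integrated against a skeleton-supported measure. Under the intended identification, $\mathcal{H}_{\theta,\sigma}$ acts on $L^2(U(E_q(K)))_0$ exactly as $\mathcal{H}_\theta$ acts on $L^2(E_q(K))_0$, namely by multiplication by $-\deg_{\mathcal{H}_\theta}$. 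Your computation of $\mathcal{H}_{\theta,\sigma}\phi=0$ therefore uses the wrong interpretation of how $\phi$ and $\nu_z$ interact, and the final ``absorption'' step is not a valid identification but an erroneous collapse of a nontrivial eigenspace to zero.
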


\begin{proof}
This follows immediately from the discussion before Lemma \ref{skeletonOperator}.
\end{proof}

\begin{Corollary}\label{MarkovProcessBerkovichTateCurve}
The operator $\mathcal{H}_{\theta,\sigma}$ is a self-adjoint bounded linear operator on $L^2(E_q^{an}(K))$.
Its spectrum coincides with that of $\mathcal{H}_\theta$ acting on $L^2(E_q^{an},\absolute{\omega})$,  and there exists a probability measure $p_t(x,\cdot)$ with $t\ge0$, $x\in E_q^{an}(K)$, on the Borel $\sigma$-algebra of $E_q^{an}(K)$ such that the Cauchy problem for the heat equation with $\mathcal{H}_{\theta,\sigma}$ has a unique solution of the form 
\[
h(t,x)=\int_{E_q^{an}(K)}h_0(y)\absolute{\sigma_*\omega(y)}
\]
where $h_0\in C(E_q^{an}(K))$ is an initial condition for the heat equation. Furthermore, $p_t(x,\cdot)$ is the transition function of a Markov process whose paths
are right continuous and have no discontinuities other than jumps.
\end{Corollary}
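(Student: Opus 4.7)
The plan is to leverage the orthogonal decomposition (\ref{orthogonalDecompositionU}) together with Lemmas \ref{skeletonOperator} and \ref{waveletOperator} in order to reduce every claim to its already-established counterpart for $\mathcal{H}_\theta$ on $L^2(E_q(K),\absolute{\omega})$.

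First I would address boundedness and self-adjointness. Any $\phi\in L^2(E_q^{an}(K),\absolute{\sigma_*\omega})$ decomposes as $\phi=\average(\phi)+(\phi-\average(\phi))$ into a radial component in $L^2(E_q^{an})_\sigma$ and a fibre-average-zero component in $L^2(E_q^{an})_0$. By Lemma \ref{skeletonOperator} the action of $\mathcal{H}_{\theta,\sigma}$ on the radial part is pulled back from $\mathcal{H}_\theta$ on the finite-dimensional subspace of $L^2(E_q(K),\absolute{\omega})$ spanned by the $\eta_\ell$, while by Lemma \ref{waveletOperator} its action on the orthogonal complement reproduces the wavelet action of $\mathcal{H}_\theta$. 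Because the measures $\nu_z$ are built from the symmetric kernel $H_\theta$, the resulting operator is symmetric and everywhere defined on $L^2(E_q^{an}(K),\absolute{\sigma_*\omega})$, hence self-adjoint and bounded by Hellinger-Toeplitz, exactly as in the earlier proof for $\mathcal{H}_\theta$.

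For the coincidence of the spectra, the eigenbasis of $\mathcal{H}_\theta$ furnished by Theorem \ref{SpectrumHeatOperator} lifts canonically: each $\eta_\ell$ becomes the radial function $\eta_\ell\circ\sigma$ and remains an eigenvector of $\mathcal{H}_{\theta,\sigma}$ with the same eigenvalue by Lemma \ref{skeletonOperator}, while each Kozyrev wavelet $\psi_{B,j}$ supported in $A_K(q)$ continues to be an eigenvector with eigenvalue $-\deg_{\mathcal{H}_\theta}(x)$ by Lemma \ref{waveletOperator}. On the closed complement $L^2(E_q^{an}(K))\ominus L^2(U(E_q(K)))$ supported outside $U(\sigma(E_q(K)))$, the operator vanishes identically by construction of $\nu_z$, contributing only the eigenvalue $0$, which is already present (as kernel of the matrix $L$). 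This exhausts $L^2(E_q^{an}(K),\absolute{\sigma_*\omega})$ and matches the spectrum established in Theorem \ref{SpectrumHeatOperator}.

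For the Cauchy problem and the Markov process statement, the cleanest route is to transfer the Feller semigroup from $\mathcal{H}_\theta$ on $C(E_q(K))$ supplied by Corollary \ref{MarkovProperty}. Using the identifications of actions on each summand of (\ref{orthogonalDecompositionU}) and the averaging map, one obtains a Feller semigroup $e^{t\mathcal{H}_{\theta,\sigma}}$ on $C(E_q^{an}(K))$; the associated transition function $p_t(x,\cdot)$ is the pushforward along $\sigma$ of the one from Corollary \ref{MarkovProperty} on radial test functions, extended by the wavelet action on the orthogonal complement. Right-continuity of paths and the strong Markov property, as well as uniqueness of the integral representation of $h(t,x)$, are preserved under this transfer. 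The main obstacle is conceptual rather than technical: $E_q^{an}(K)$ is a Berkovich analytic space and not literally a $K$-analytic manifold to which Theorem \ref{MarkovPropertyMf} applies directly, so the argument has to route the analytic content through the classical side $E_q(K)$ via the retraction $\sigma$ and the averaging operator, rather than invoking the general theorem on $E_q^{an}(K)$ itself.
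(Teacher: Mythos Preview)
Your proposal is essentially the paper's own argument: both reduce everything to $\mathcal{H}_\theta$ via the orthogonal decomposition (\ref{orthogonalDecompositionU}), the identifications $L^2(U(E_q(K)))_\sigma\cong L^2(E_q(K))_\sigma$ and $L^2(U(E_q(K)))_0\cong L^2(E_q(K))_0$, Lemmas \ref{skeletonOperator} and \ref{waveletOperator}, and then invoke Theorem \ref{SpectrumHeatOperator} and Corollary \ref{MarkovProperty}. The one point where the paper goes beyond your outline is precisely the ``conceptual obstacle'' you flag at the end: to secure the Feller semigroup property for $\mathcal{H}_{\theta,\sigma}$ on $C(U(E_q(K)))$, the paper does not argue from scratch but cites an external result, \cite[Prop.\ 15]{brad_heatMumf}, which handles exactly this transfer to the Berkovich side; your sketch identifies the gap correctly but leaves it open.
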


\begin{proof}
This follows from the orthogonal decomposition
(\ref{orthogonalDecompositionU}), the isomorphism 
\[
L^2(U(E_q(K)))_0\cong L^2(E_q(K))_0
\]
observed before Lemma \ref{skeletonOperator},
the isomorphism
\[
L^2(U(E_q(K)))_\sigma=L^2(E_q(K))_\sigma
\]
between the radial parts, and Lemmas \ref{skeletonOperator} and \ref{waveletOperator},  and using Theorem \ref{SpectrumHeatOperator} and Corollary \ref{MarkovProperty}.
Notice that the Feller semigroup property of the operator $\mathcal{H}_{\theta,\sigma}$ restricted to $C(U(E_q(K)))$ requires \cite[Prop.\ 15]{brad_heatMumf}.
\end{proof}

Apart from having a diffusion operator on the Berkovich-analytifi\-ca\-tion of a Tate curve, the added value of Corollary \ref{MarkovProcessBerkovichTateCurve} is that
the operator $\mathcal{H}_{\theta,\sigma}$ is controlled also by the Chambert-Loir measure $c_1(\mathcal{L}
,\norm{\cdot}_{\sigma(x)})$ equaling the Dirac measure on the point $\sigma(x)$ on the skeleton of the Tate curve. And also the push-forward of the positive Radon measure 
\[
\mu_x=H_\theta(x,\cdot)\absolute{\omega(\cdot)}
\]
can be written as a Chambert-Loir measure
\[
\mu_{\sigma(x)}=c_1(\mathcal{L}
,\norm{\cdot}_{\theta,x})
\]
since we are dealing with the case of a curve.

\begin{prop}\label{heatMeasureCL}
The measure $\nu_{\sigma(x)}$ can be written as
\[
\nu_{\sigma(x)}=\sum\limits_{z\in\sigma(E^{an}_q(K))}
\alpha_z\, c_1\left(\mathcal{L}
,\norm{\cdot}_z\right)-\deg_\theta(\sigma(x))\,c_1\left(\mathcal{L}
,\norm{\cdot}_{\sigma(x)}\right)
\]
with $\alpha_z>0$, and
$\norm{\cdot}_y=e^{-g_y}
$
with $g_y$ a continuous subharmonic function on $E_q^{an}$
such that
\[
c_1\left(\mathcal{L}
,\norm{\cdot}_y\right)=\delta_y
\]
for $y\in \sigma(E^{an}_q(K))\sqcup\mathset{\sigma(x)}
$
and $x\in E_q(K)$. 
\end{prop}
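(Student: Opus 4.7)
The plan is to combine two ingredients: first, that the pushforward $\mu_{\sigma(x)} = \sigma_*\mu_x$ is a finite positive linear combination of Dirac measures supported on $\sigma(E_q(K)) \subset I(E_q^{an})$; and second, Thuillier's solution of the Monge-Amp\`ere equation on smooth analytic curves, which realises each individual Dirac measure as a Chambert-Loir curvature form for a suitable metric on an ample line bundle of degree one.

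First, I would verify that $\sigma(E_q(K))$ is the finite set $\mathset{\sigma(\pi^\ell)\mid \ell = 0,\dots,v(q)-1}$ of cardinality $v(q)$. Any $K$-rational point has a representative in the fundamental annulus $A_K(q)$, whose absolute value lies in $\mathset{\absolute{\pi}^\ell\mid \ell = 0,\dots,v(q)-1}$, and the retraction $\sigma$ depends on a representative only through its absolute value. Since $\mu_x = H_\theta(x,\cdot)\absolute{\omega(\cdot)}$ is a positive Radon measure on $E_q(K)$, its pushforward decomposes as
\[
\mu_{\sigma(x)} = \sum_{z \in \sigma(E_q(K))} \alpha_z\, \delta_z,\qquad \alpha_z = \int_{\sigma^{-1}(z)\cap E_q(K)} H_\theta(x,y)\,\absolute{\omega(y)}.
\]
Up to the factor $1-\absolute{\pi}$, these coefficients are precisely the matrix entries $A_\sigma(k,\ell)$ of the matrix $L$ in (\ref{helpfulMatrix}); by Lemma \ref{matrixElements} they are all strictly positive.

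Second, I would fix an ample line bundle $\mathcal{L}$ on $E_q$ of degree one, for instance $\mathcal{L} = \mathcal{O}(P)$ for some $P \in E_q(K)$. By Thuillier's Theorem \cite[Cor.\ 3.4.18]{Thuillier2005}, on the smooth analytic curve $E_q^{an}$ every positive Radon measure of total mass $\deg_{\mathcal{L}}(E_q) = 1$ arises as the Chambert-Loir curvature form $c_1(\mathcal{L},\norm{\cdot})$ of a continuous subharmonic metric on $\mathcal{L}$. Applying this to each of the finitely many unit-mass Dirac measures $\delta_y$ with $y \in \sigma(E_q^{an}(K)) \sqcup \mathset{\sigma(x)}$ yields metrics $\norm{\cdot}_y = e^{-g_y}$ with $g_y$ continuous subharmonic on $E_q^{an}$ satisfying $c_1(\mathcal{L},\norm{\cdot}_y) = \delta_y$.

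Substituting both ingredients into the definition $\nu_{\sigma(x)} = \mu_{\sigma(x)} - \deg_\theta(\sigma(x))\,\delta_{\sigma(x)}$ then gives the asserted identity. The main obstacle is the mass-matching hypothesis in Thuillier's Calabi-Yau statement, which is handled by choosing $\mathcal{L}$ of degree exactly one so that each Dirac $\delta_y$ has the correct total mass to be represented as $c_1(\mathcal{L},\norm{\cdot}_y)$; everything else reduces to the elementary observation that $\sigma$ collapses the spheres inside $A_K(q)$ to finitely many points on the skeleton.
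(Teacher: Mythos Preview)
Your proposal is correct and follows essentially the same route as the paper: decompose $\nu_{\sigma(x)}$ as a finite signed combination of Dirac measures supported on $\sigma(E_q(K))$, identify the positive coefficients $\alpha_z$ with the adjacency-type entries computed in Lemma~\ref{matrixElements}, and then invoke Thuillier's solution of the Monge--Amp\`ere equation on curves to write each $\delta_y$ as $c_1(\mathcal{L},\norm{\cdot}_y)$. The paper cites \cite[Lem.~3.4.14]{Thuillier2005} rather than Cor.~3.4.18 and is less explicit about the mass-matching condition; your choice of a degree-one $\mathcal{L}$ to make each $\delta_y$ have the right total mass is a sensible clarification that the paper leaves implicit.
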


\begin{proof}
Clearly, $\nu_{\sigma(x)}$ is a linear combination of Dirac measures:
\[
\nu_{\sigma(x)}
=\sum\limits_{z\in \sigma(E_q^{an}(K))}
\alpha_z\,\delta_z-\deg_\theta(\sigma(x))\,\delta_{\sigma(x)}
\]
supported in $\sigma(E_q(K))$. 
Using the isomorphism 
\[
L^2(E(K),\absolute{\omega})\cong L^2(U(E_q(K)))
\]
obtained in the proof of Corollary \ref{MarkovProcessBerkovichTateCurve}, observe that
the elements $\alpha_z$ are the corresponding entries in the helpful matrix $L$ of (\ref{helpfulMatrix}).
Using \cite[Lem.\ 3.4.14]{Thuillier2005}, find that the Dirac measures are now seen to be of the form
\[
\delta_y=\ddc^c(g_y)
\]
with $g_y$ subharmonic on $E_q^{an}(K)$ for $z\in \sigma(E_q^{an}(K))\sqcup\mathset{\sigma(x)}$. Then $\norm{\cdot}_y=e^{-g}$ are the desired corresponding metrics on $\mathcal{L}$
proving the assertion.
\end{proof}

\begin{Corollary}\label{ChernOperator}
The heat operator $\mathcal{H}_{\theta,\sigma}$ on $E_q^{an}(K)$ is obtained as an integral operator of the form
\[
\mathcal{H}_{\theta,\sigma}\psi(x)=\int_{E_q^{an}}\psi \,c_1\left(\mathcal{L}
,\norm{\cdot}_{\theta,x}\right)
\]
with metric
$
\norm{\cdot}_{\theta,x}=e^{-g_{\theta,x}}
$
and
\[
g_{\theta,x}=\sum\limits_{z\in \sigma(E_q^{an}(K))}\alpha_z g_z
\]
where $\alpha_z$ and $g_z$ are as in Proposition \ref{heatMeasureCL}, and $x\in\sigma(E_q(K))$.
\end{Corollary}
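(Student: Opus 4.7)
The plan is to combine the decomposition of the signed Radon measure $\nu_{\sigma(x)}$ provided by Proposition \ref{heatMeasureCL} with the linearity of the Laplacian $\ddc^c$ acting on potentials, so that the weighted sum of individual $-\log$-metrics becomes a single potential for the metric $\norm{\cdot}_{\theta,x}$. Concretely, I would begin from the defining identity
\[
\mathcal{H}_{\theta,\sigma}\psi(x)=\int_{I(E_q^{an})}\psi\,d\nu_{\sigma(x)},
\]
substitute the expression for $\nu_{\sigma(x)}$ as a signed combination of Chambert-Loir measures $c_1(\mathcal{L},\norm{\cdot}_z)$ furnished by Proposition \ref{heatMeasureCL}, and collect the contribution at $z=\sigma(x)$, so that one ends up with a single sum ranging over $z\in\sigma(E_q^{an}(K))$ with real coefficients $\alpha_z$ (now possibly signed at $z=\sigma(x)$ due to the $-\deg_\theta(\sigma(x))$ term).

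The key step is then purely formal. Using $c_1(\mathcal{L},\norm{\cdot}_z)=\ddc^c g_z$ from Proposition \ref{heatMeasureCL} together with the $\mathds{R}$-linearity of $\ddc^c$ on the space of differences of continuous subharmonic potentials, one obtains
\[
\sum_z\alpha_z\,\ddc^c g_z=\ddc^c\left(\sum_z\alpha_z g_z\right)=\ddc^c g_{\theta,x}=c_1\left(\mathcal{L},e^{-g_{\theta,x}}\right),
\]
whence $\nu_{\sigma(x)}=c_1(\mathcal{L},\norm{\cdot}_{\theta,x})$ as currents of degree one on $E_q^{an}$. Feeding this back into the defining integral yields the displayed integral representation of $\mathcal{H}_{\theta,\sigma}\psi(x)$.

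The main obstacle I anticipate is the bookkeeping of signs. The decomposition in Proposition \ref{heatMeasureCL} is stated with strictly positive weights $\alpha_z>0$ and a separate negative Dirac contribution at $z=\sigma(x)$; absorbing that contribution into a single coefficient at $\sigma(x)$ produces a potentially negative weight, so that $\norm{\cdot}_{\theta,x}$ need not itself be a subharmonic (semipositive) metric on $\mathcal{L}$ and one cannot invoke Thuillier's Calabi-Yau construction directly at this combined level. What saves the argument is that the required identity lives at the level of the current $\ddc^c g_{\theta,x}$, where linearity holds on the full space $A^0(E_q^{an})$ of piecewise affine potentials (not only on its subharmonic cone); this is enough to justify the integral representation asserted in the statement, and the corollary then follows.
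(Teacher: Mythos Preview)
Your proposal is correct and follows the same route as the paper, which simply records the corollary as ``an immediate consequence of Proposition \ref{heatMeasureCL}.'' You supply more detail than the paper does---in particular the explicit use of the $\mathds{R}$-linearity of $\ddc^c$ to pass from $\sum_z\alpha_z\,\ddc^c g_z$ to $\ddc^c g_{\theta,x}$---and you correctly flag the sign issue at $z=\sigma(x)$ that the paper glosses over; both points are apt, but the underlying argument is identical.
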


This is Theorem \ref{thirdTheorem}
of the Introduction.

\begin{proof}
This is an immediate consequence of Proposition \ref{heatMeasureCL}.
\end{proof}
\section{On Hearing the Shape of a Tate Curve}

In \cite[Cor.\ VI.1]{BL_shapes_p}, it was shown how to reconstruct a reduction graph of a Mumford curve from the spectrum of a $p$-adic diffusion operator having an infinitely-valued spectrum. The spectrum of the operator $\mathcal{H}_\theta$ has only  finitely many values, and can also recover some information about the  $K$-rational points of a Tate curve $E_q$ themselves. This is shown in the following theorem:

\begin{thm}\label{applicationTheorem}
Let $E_q$ be a Tate elliptic curve defined over a 
local field 
$K$. Assume that all about $K$ is known. Then the spectrum of the Laplacian operator $\mathcal{H}_\theta$
on $L^2(E_q(K),\absolute{\omega})$,
or of the Laplacian operator $\mathcal{H}_{\theta,\sigma}$ on $L^2(U(E_q(K)))$,
respecively, can detect the following:
\begin{enumerate}
\item   the presence of a $2$-torsion point in $E_q(K)$.
\item  the parity of $v(q)$.
\item the presence of $\pi^\ell\in E_q(K)$ which is a third root of $\pi^{-k}$, where $k,\ell$ are solutions for the congruence
\[
k+3\ell\equiv 0\mod v(q)
\]
in $\mathds{Z}/v(q)\mathds{Z}$.
\end{enumerate}
\end{thm}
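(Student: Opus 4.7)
The plan is to read off the listed arithmetic features directly from the explicit formula for the degree function in Corollary \ref{degreeFunction}, using that the negative degree values $-\deg_{\mathcal{H}_\theta}(\pi^{k})$ for $k = 0, \ldots, v(q)-1$ are genuine eigenvalues of $\mathcal{H}_\theta$ on $L^2(E_q(K), |\omega|)$ by Theorem \ref{SpectrumHeatOperator}. Since $K$ (and hence $\pi$ and $|\pi| = p^{-f}$) is assumed known, every geometric series of the form $|\pi|^{k}\sum_\ell |\pi|^{3\ell}$ appearing in Corollary \ref{degreeFunction} is a quantity one can fully tabulate in advance, so the degree values encode the omitted congruence exponents.

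First, I would extract $v(q)$ itself from the spectrum: the finite-dimensional part $L^2(E_q(K))_\sigma$ is spanned by exactly $v(q)$ indicator functions on circles $S_\ell(0)$, and by Theorem \ref{SpectrumHeatOperator} contributes $v(q)$ eigenvalues (counted with multiplicity) coming from the matrix $L$; the remaining degree eigenvalues $-\deg_{\mathcal{H}_\theta}(\pi^k)$ also come in $v(q)$ groups of infinite multiplicity (one per circle of the fundamental annulus). Either count recovers $v(q)$, and from that one reads off its parity, proving part (2). More concretely, the parity is also directly visible from any single degree value via the special coefficient $\epsilon_q(k)$ in Corollary \ref{degreeFunction}, which distinguishes the even from the odd case.

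For part (1), note that a $2$-torsion point in $E_q(K)$ represented by $u \in A_K(q)$ with $|u| = |\pi|^k$ forces $|u|^2 \in |q|^{\mathds{Z}}$, i.e.\ $2k \equiv 0 \bmod v(q)$, which for $k \in \{1,\ldots,v(q)-1\}$ is possible precisely when $v(q)$ is even, with $k = v(q)/2$. In the formula of Corollary \ref{degreeFunction} this is exactly the case where the ``diagonal'' term $|\pi|^{4k}/(1-|\pi|^3)$ collapses into the leading constant $1$ (because the contributions $\ell = k$ and $\ell = v(q)-k$ coincide), producing a degree value which differs from that at the neighbouring $k$'s by a prescribed amount. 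Having recognized the parity of $v(q)$ from step (2), comparing the computed eigenvalues $-\deg_{\mathcal{H}_\theta}(\pi^k)$ with the two possible closed-form expressions therefore reveals whether the absolute value $|\pi|^{v(q)/2}$ is actually realised by a point of $E_q(K)$ in the fundamental annulus, which by the paragraph above is equivalent to the presence of a non-trivial $2$-torsion point.

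For part (3), the relevant congruence $k + 3\ell \equiv 0 \bmod v(q)$ corresponds to a pair $(k,\ell)$ for which the monomial $|\pi|^{k+3\ell}$ (from Lemma \ref{matrixElements}, case $k+\ell \not\equiv 0$, $k \not\equiv \ell$, with $\ell < k$) hits a term equal to $1$; by inspection of the Lemma, this is precisely a term which is \emph{omitted} from the finite geometric sum defining $\deg_{\mathcal{H}_\theta}(\pi^k)/(1-|\pi|)$ because $k+\ell \equiv 0$ takes over with matrix entry $1$. The plan is to subtract the predicted ``generic'' value (computed under the assumption that no such congruence is solvable) from the observed eigenvalue $-\deg_{\mathcal{H}_\theta}(\pi^k)$; the discrepancy is a polynomial in $|\pi|$ whose non-zero monomials pinpoint exactly those exponents $\ell$ with $k+3\ell \equiv 0 \bmod v(q)$, and from the exponents one reads off both $\ell$ and the cube-root relation $\pi^{3\ell} \equiv \pi^{-k} \bmod q^{\mathds{Z}}$ in $E_q(K)$. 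The transfer to $\mathcal{H}_{\theta,\sigma}$ on $L^2(U(E_q(K)))$ is automatic from Corollary \ref{MarkovProcessBerkovichTateCurve}, which identifies the two spectra.

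The main obstacle is bookkeeping: one must carefully separate, in Corollary \ref{degreeFunction}, which of the missing-term phenomena comes from the $2$-torsion condition $2k \equiv 0$ and which from the cube-root condition $k + 3\ell \equiv 0$, so that the two detections do not interfere. Once the parity of $v(q)$ is fixed, however, the two effects occur at different algebraic positions in the geometric sums, and can be disentangled term by term.
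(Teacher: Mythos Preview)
Your proposal is correct and follows essentially the same approach as the paper: both arguments reduce the detection claims to reading off the explicit degree eigenvalues from Corollary \ref{degreeFunction}, together with the identification of the two spectra. The paper cites Lemmas \ref{skeletonOperator} and \ref{waveletOperator} for that identification while you invoke Corollary \ref{MarkovProcessBerkovichTateCurve}, but the latter is derived from the former, so this is the same route; your write-up simply supplies the bookkeeping details that the paper leaves implicit.
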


\begin{proof}
From Lemmas \ref{skeletonOperator} and \ref{waveletOperator}, it follows that the spectra of both operators coincide.
 The assertions now follows from the explicit form of the negative degree eigenvalues given in Corollary \ref{degreeFunction}.
\end{proof}

\section*{Acknowledgements}
Yassine El Maazouz, Stefan K\"uhnlein, \'Angel Mor\'an Ledezma and David Weisbart are warmly thanked for fruitful discussions. 
Frank Herrlich is thanked for guiding the author towards this beautiful area of mathematics when he was his student.
The anonymous referee is thanked for helpful and clarifying suggestions towards substantially improving the paper.
This work is partially supported by the Deutsche Forschungsgemeinschaft under project number 469999674.

\bibliographystyle{plain}
\bibliography{biblio}

\end{document}